\newcommand{\interieur}[1]{\ensuremath{\operatorname{int} #1}}
\newcommand{\adherence}[1]{\ensuremath{\overline{#1}}}	
\newcommand{\intn}[2]{\ensuremath{\{  #1  ,\ldots,  #2 \}}}
\def\grint{\mathrm{G}}
\def\grintfalc{\mathcal{G}}
\def\R{\mathbb{R}}
\def\Q{\mathbb{Q}}
\def\N{\mathbb{N}}
\def\Z{\mathbb{Z}}
\def\hau{\mathcal{H}}
\def\leb{\mathcal{L}}
\def\S{\mathcal{S}}
\def\I{\mathcal{I}}
\def\netm{\mathcal{M}}
\newcommand{\diam}[1]{\ensuremath{| #1 |}}
\newcommand{\gene}[1]{\ensuremath{\langle #1 \rangle}}
\def\eps{\varepsilon}
\def\ph{\varphi}
\def\jauge{\mathfrak{D}}
\def\prem{\mathbb{P}}
\def\weak{\rm w}
\newcommand{\modu}[1]{\ensuremath{\,\operatorname{mod}\, #1}}
\def\devel{{\rm Dev}}
\def\birkhoff{{\rm Bir}}
\def\mesb{\mathfrak{M}}
\newcommand{\restr}[1]{_{\rule[-.41ex]{.03em}{2ex}#1}}
\def\ind{\mathrm{1}}
\renewcommand{\theenumi}{\alph{enumi}}
\theoremstyle{plain}
\newtheorem{thm}{Theorem}
\newtheorem{prp}{Proposition}
\newtheorem{lem}[prp]{Lemma}
\newtheorem{cor}[prp]{Corollary}
\theoremstyle{definition}
\newtheorem*{df}{Definition}
\theoremstyle{remark}
\newtheorem*{rem}{Remark}
\title{Ubiquitous systems and metric number theory}
\author{Arnaud Durand}
\subjclass[2000]{Primary 28A80; Secondary 28A78, 11J83}
\address{Laboratoire d'Analyse et de Mathématiques Appliquées, Université Paris XII, 61 av. du Général de Gaulle, 94010 Créteil Cedex, France.}
\email{a.durand@univ-paris12.fr}
\begin{document}

\begin{abstract}
We investigate the size and large intersection properties of
$$
E_{t}=\{ x\in\R^d \:|\: \|x-k-x_{i}\|<{r_{i}}^t\text{ for infinitely many }(i,k)\in I^{\mu,\alpha}\times\Z^d\},
$$
where $d\in\N$, $t\geq 1$, $I$ is a denumerable set, $(x_{i},r_{i})_{i\in I}$ is a family in $[0,1]^d\times (0,\infty)$ and $I^{\mu,\alpha}$ denotes the set of all $i\in I$ such that the $\mu$-mass of the ball with center $x_{i}$ and radius $r_{i}$ behaves as ${r_{i}}^\alpha$ for a given Borel measure $\mu$ and a given $\alpha>0$. We establish that the set $E_{t}$ belongs to the class $\grint^h(\R^d)$ of sets with large intersection with respect to a certain gauge function $h$, provided that $(x_{i},r_{i})_{i\in I}$ is a heterogeneous ubiquitous system with respect to $\mu$. In particular, $E_{t}$ has infinite Hausdorff $g$-measure for every gauge function $g$ that increases faster than $h$ in a neighborhood of zero. We also give several applications to metric number theory.
\end{abstract}

\maketitle

\section{Introduction}\label{grintheterintro}

A classical problem in the theory of Diophantine approximation is to describe the size properties of the set
$$
J_{\tau}=\left\{ x\in\R \:\Biggl|\: \left|x-\frac{p}{q}\right|<q^{-\tau}\text{ for infinitely many }(p,q)\in\Z\times\N \right\}
$$
of all real numbers that are $\tau$-approximable by rationals ($\tau>0$). A well-known theorem of Dirichlet ensures that $J_{\tau}=\R$ if $\tau\leq 2$, see~\cite{Hardy:1979fk}. Conversely, $J_{\tau}$ has Lebesgue measure zero if $\tau>2$. To give a more precise account of the size properties of $J_{\tau}$, Jarn\'ik~\cite{Jarnik:1929mf} and Besicovitch~\cite{Besicovitch:1934ly} established that its Hausdorff dimension is $2/\tau$. Furthermore, Jarn\'ik~\cite{Jarnik:1931qf} determined its Hausdorff $h$-measure (see Section~\ref{grinthetersets} for the definition) for some functions $h$ in the set $\jauge_{1}$ which is defined as follows.

\begin{df}
For $d\in\N$, let $\jauge_{d}$ be the set of all functions which vanish at zero, are continuous and nondecreasing on $[0,\eps]$ and are such that $r\mapsto h(r)/r^d$ is positive and nonincreasing on $(0,\eps]$, for some $\eps>0$. Any function in $\jauge_{d}$ is called a gauge function. Moreover, for $g,h\in\jauge_{d}$, let us write $g\prec h$ if $g/h$ monotonically tends to infinity at zero.
\end{df}

The result of Jarn\'ik, later refined by V.~Beresnevich, D.~Dickinson and S.~Velani~\cite{Beresnevich:2006ve}, is the following: for any $h\in\jauge_{1}$ with $h(r)\prec r$, the set $J_{\tau}$ has infinite (resp. zero) Hausdorff $h$-measure if $\sum_{q} h(q^{-\tau})q=\infty$ (resp. $<\infty$). On top of that, K.~Falconer~\cite{Falconer:1994hx} proved that $J_{\tau}$ enjoys a {\em large intersection} property, in the sense that it belongs to the class $\grintfalc^{2/\tau}(\R)$. Recall that the class $\grintfalc^s(\R^d)$ of sets with large intersection of dimension at least a given $s\in (0,d]$ was defined by K.~Falconer~\cite{Falconer:1994hx} as the collection of all $G_{\delta}$-subsets $F$ of $\R^d$ such that
$$
\dim\bigcap_{n=1}^\infty f_{n}(F)\geq s
$$
for every sequence $(f_{n})_{n\in\N}$ of similarities, where $\dim$ stands for Hausdorff dimension. It is the maximal class of $G_{\delta}$-sets of dimension at least $s$ that is closed under countable intersections and similarities. Thus, any intersection of a countable number of sets of $\grintfalc^s(\R^d)$ is of dimension at least $s$. Note that this property is quite counterintuitive, since the dimension of the intersection of two subsets of $\R^d$ of dimensions $d_{1}$ and $d_{2}$ respectively is usually expected to be $d_{1}+d_{2}-d$, see~\cite[Chapter~8]{Falconer:2003oj}.

Theorem~5 in~\cite{Durand:2007uq} describes more precisely the large intersection properties of $J_{\tau}$. Specifically, $J_{\tau}$ belongs to a certain class $\grint^h(\R)$ for any $h\in\jauge_{1}$ with $\sum_{q} h(q^{-\tau})q=\infty$. The class $\grint^h(\R)$, defined in~\cite{Durand:2007uq}, is closed under countable intersections and similarities and every set in it has infinite Hausdorff $g$-measure for any $g\in\jauge_{1}$ such that $g\prec h$ (see Section~\ref{grinthetersets} for details and complements). In particular, $J_{\tau}$ belongs to $\grint^{r^{2/\tau}}(\R)$, which is included in the class $\grintfalc^{2/\tau}(\R)$ of K.~Falconer. Moreover, Theorem~5 in~\cite{Durand:2007uq} ensures that this set has maximal Hausdorff $h$-measure in every open subset of $\R$ for any $h\in\jauge_{1}$ such that $\sum_{q} h(q^{-\tau})q=\infty$. This is an improvement on Jarn\'ik's theorem. The aforementioned results follow from a certain homogeneity in the repartition of the rationals: they form a {\em homogeneous ubiquitous system} in the sense of~\cite{Durand:2007uq}.

Further improvements are obtained by allowing restrictions on the rational approximates. For instance, G.~Harman~\cite{Harman:1998fk} studied the set
$$
J^\prem_{\tau}=\left\{ x\in\R \:\Biggl|\: \left|x-\frac{p}{q}\right|<q^{-\tau}\text{ for infinitely many primes }(p,q)\in\Z\times\N \right\}
$$
of all real numbers that are $\tau$-approximable by rationals whose numerator and denominator are prime ($\tau>0$). He established that $J^\prem_{\tau}$ has full (resp. zero) Lebesgue measure in $\R$ if $\tau<2$ (resp. $\tau\geq 2$). Moreover, Theorem~7 in~\cite{Durand:2007uq} implies that $J^\prem_{\tau}$ has maximal (resp. zero) Hausdorff $h$-measure in every open subset of $\R$ for any $h\in\jauge_{1}$ with $\sum_{q} h(q^{-\tau})q/(\log q)^2=\infty$ (resp. $<\infty$). In addition, $J^\prem_{\tau}$ enjoys a large intersection property, since it belongs to $\grint^h(\R)$ for any $h\in\jauge_{1}$ such that the preceding sum diverges. More generally, the results of~\cite{Durand:2007uq,Harman:1998fk} enable to describe the size and large intersection properties of the sets that are obtained when the numerator and the denominator of the rational approximates are required to belong to various subsets of $\Z$. This follows from the fact that these restricted rationals form a homogeneous ubiquitous system, see~\cite{Durand:2007uq}.

J.~Barral and S.~Seuret~\cite{Barral:2004ae,Barral:2006fk} suggested to impose new restrictions on the rational approximates: the {\em Besicovitch conditions}. Consider an integer $c\geq 2$. Each real number $x$ can be written on the form $x=x_{0}+\sum_{p} x_{p}c^{-p}$, where $x_{0}\in\Z$ and $x_{p}$ ($p\in\N$) belongs to $\intn{0}{c-1}$ and is not asymptotically constantly equal to $c-1$. For every $b\in\intn{0}{c-1}$ and every $j\in\N$, let
$$
\sigma_{s,j}(x)=\frac{1}{j}\#\left\{ p\in\intn{1}{j} \:|\: x_{p}=s \right\}.
$$
Given a probability vector $\pi=(\pi_{0},\ldots,\pi_{c-1})\in (0,1)^c$ (with $\sum_{b}\pi_{b}=1$), Besicovitch~\cite{Besicovitch:1934fk} and Eggleston~\cite{Eggleston:1949kx} investigated the set of all real numbers $x$ such that $\sigma_{b,j}(x)\to\pi_{b}$ for all $b\in\intn{0}{c-1}$ and established that its Hausdorff dimension is $\alpha=-\sum_{b} \pi_{b}\log_{c}\pi_{b}$. Thus, following the terminology of J.~Barral and S.~Seuret~\cite{Barral:2006fk}, an infinite set of rationals is said to fulfill the Besicovitch condition associated with $\pi$ if it can be enumerated as a sequence $(p_{n}/q_{n})_{n\in\N}$ enjoying $q_{n}\to\infty$ and $\sigma_{b,\lfloor 2\log_{c}q_{n} \rfloor}(p_{n}/q_{n}) \to \pi_{b}$ for all $b\in\intn{0}{c-1}$. Then, for $\tau\geq 2$, let
$$
J^\pi_{\tau}=\left\{ x\in\R \:\Biggl|\: \begin{array}{l}
\exists (p_{n}/q_{n})_{n\in\N}\text{ irreducible} \:|\: q_{n}\to\infty \\
\forall n \qquad |x-p_{n}/q_{n}|\leq {q_{n}}^{-\tau} \\
\forall b \qquad \sigma_{b,\lfloor 2\log_{c}q_{n} \rfloor}(p_{n}/q_{n}) \to \pi_{b}
\end{array}\right\}
$$
denote the subset of $J_{\tau}$ composed of all reals that are $\tau$-approximable by rationals fulfilling the Besicovitch condition associated with $\pi$. This condition makes it awkward to determine whether the rational approximates form a homogeneous ubiquitous system. Moreover, even in this case, the results of~\cite{Durand:2007uq} may not yield optimal information as regards the size and large intersection properties of $J^\pi_{\tau}$. To cope with these difficulties, J.~Barral and S.~Seuret~\cite{Barral:2004ae} introduced the notion of {\em heterogeneous ubiquitous system} (see Section~\ref{grintheterubiq}), thereby proving that the Hausdorff dimension of $J^\pi_{\tau}$ is $2\alpha/\tau$. Corollary~\ref{grintbarralseuret} below will additionally show that $J^\pi_{\tau}$ enjoys a large intersection property. Specifically, for any $\eta\in (0,1/8)$, it contains a set of the class $\grint^{g_{\alpha,\tau,\eta}}(\R)$ where $g_{\alpha,\tau,\eta}(r)=r^{\frac{2\alpha}{\tau}-3(-\log r)^{\eta-1/8}}$. As a result, it also contains a set of the class $\grintfalc^{2\alpha/\tau}(\R)$ of K.~Falconer.

A neighboring problem is to describe the size and large intersection properties of the set
$$
\tilde J^\pi_{\tau}=\left\{ x\in\R \:\Biggl|\: \begin{array}{l}
\exists (p_{n}/q_{n})_{n\in\N}\text{ irreducible} \:|\: q_{n}\to\infty \\
\forall n \qquad |x-p_{n}/q_{n}|\leq {q_{n}}^{-\tau} \\
\forall b \qquad \sigma_{b,\lfloor 2\log_{c}q_{n} \rfloor}(x) \to \pi_{b}
\end{array}\right\}.
$$
The Besicovitch condition now bears on the approximated reals rather than on the rational approximates. J.~Barral and S.~Seuret~\cite{Barral:2006fk} established that some subset of $\tilde J^\pi_{\tau}$ belongs to the class $\grintfalc^{2\alpha/\tau}(\R)$ of K.~Falconer. A variant of Corollary~\ref{grintbarralseuret} enables to refine this result: $\tilde J^\pi_{\tau}$ contains a set of the class $\grint^{g_{\alpha,\tau,\eta}}(\R)$, which is strictly included in $\grintfalc^{2\alpha/\tau}(\R)$. The methods of~\cite{Durand:2007uq} cannot be applied here. Indeed, $\tilde J^\pi_{\tau}$ is included in the set of all reals $x$ such that $\sigma_{b,\lfloor 2\log_{c}q_{n} \rfloor}(x) \to \pi_{b}$ for all $b\in\intn{0}{c-1}$ and some sequence $(q_{n})_{n\in\N}$ diverging to infinity. As shown in~\cite{Barral:2006fk}, the dimension of this last set is $\alpha$. Thus, $\tilde J^\pi_{\tau}$ has Lebesgue measure zero for every $\tau>0$ if $\alpha<1$, so that the rational approximates do not form a homogeneous ubiquitous system.

The asymptotic frequencies of the digits of real numbers are linked with the local behavior of the {\em multinomial measures}, so that the Besicovitch conditions can be recast using these measures, see Section~\ref{grintheterapplic}. Consequently, the study of the large intersection properties of $J^\pi_{\tau}$ is a special case of the following general problem. Let us endow $\R^d$ with the supremum norm, let $I$ denote a denumerable set and let $(x_{i},r_{i})_{i\in I}$ be a family in $[0,1]^d\times (0,\infty)$ such that zero is the only cluster point of $(r_{i})_{i\in I}$. The size and large intersection properties of the set
$$
F_{t}=\left\{ x\in\R^d \:\bigl|\: \|x-k-x_{i}\|<{r_{i}}^t\text{ for infinitely many }(i,k)\in I\times\Z^d\right\},
$$
where $t\in [1,\infty)$, are well determined when the family $(k+x_{i},r_{i})_{(i,k)\in I\times\Z^d}$ forms a homogeneous ubiquitous system. To be specific, Theorem~2 in~\cite{Durand:2007uq} ensures that $F_{t}\in\grint^{r^{d/t}}(\R^d)$. Note that $F_{t}$ is the natural generalization of the set $J_{\tau}$ of all reals that are $\tau$-approximable by rationals. Likewise, the set $J^\pi_{\tau}\subseteq J_{\tau}$ of all reals that are $\tau$-approximable by rationals fulfilling the Besicovitch condition associated with $\pi$ is comparable with
$$
E_{t}=\left\{ x\in\R^d \:\bigl|\: \|x-k-x_{i}\|<{r_{i}}^t\text{ for infinitely many }(i,k)\in I^{\mu,\alpha}\times\Z^d\right\}\subseteq F_{t},
$$
where $I^{\mu,\alpha}$ denotes the set of all $i\in I$ such that the $\mu$-mass of the ball with center $x_{i}$ and radius $r_{i}$ behaves as ${r_{i}}^\alpha$ for some Borel measure $\mu$ and some $\alpha>0$. J.~Barral and S.~Seuret~\cite{Barral:2004ae} computed the Hausdorff dimension of $E_{t}$ when $(x_{i},r_{i})_{i\in I}$ is a heterogeneous ubiquitous system with respect to $\mu$, which imposes certain conditions on the repartition of the balls with center $x_{i}$ and radius $r_{i}$ and on the local behavior of $\mu$, see Section~\ref{grintheterubiq}. As shown by Theorem~\ref{UBIQUITYHETER} in this paper, the set $E_{t}$ additionally enjoys a large intersection property, in the sense that it lies in the class $\grint^h(\R^d)$ for a certain gauge function $h\in\jauge_{d}$.

The description of the large intersection properties of $J^\pi_{\tau}$ is deduced from Theorem~\ref{UBIQUITYHETER} by picking the multinomial measure associated with $\pi$ to play the role of $\mu$. Similarly, by choosing the Gibbs measure associated with some H\"older continuous function $f$, we infer the large intersection properties of the set of all points that are approximable by rationals where the average of the Birkhoff sum associated with $f$ has a given limit, see Section~\ref{grintheterapplic}.

The paper is organized as follows. In Section~\ref{grinthetersets} we recall the definition of the class $\grint^h(V)$ of sets with large intersection in a given nonempty open set $V$ with respect to a given gauge function $h\in\jauge_{d}$, which was introduced in~\cite{Durand:2007uq}. In addition, we supply a sufficient condition expressed in terms of similarities to ensure that a set belongs to $\grint^h(\R^d)$ where $h$ is of a certain form. This condition is convenient to establish Theorem~\ref{UBIQUITYHETER} given in Section~\ref{grintheterubiq} according to which the aforementioned set $E_{t}$ enjoys a large intersection property if the family $(x_{i},r_{i})_{i\in I}$ is a heterogeneous ubiquitous system with respect to some measure $\mu$. Section~\ref{grintheterapplic} then provides several applications to metric number theory. Section~\ref{grintheterproofsets} and Section~\ref{grintheterproofubiq} are devoted to proving the main results of the paper.

\section{Sets with large intersection and similarities}\label{grinthetersets}

Recall that the set $\jauge_{d}$ of gauge functions is defined at the beginning of Section~\ref{grintheterintro}. For any $h\in\jauge_{d}$, the Hausdorff $h$-measure is given by
$$
\forall F\subseteq \R^d \qquad \hau^h(F)=\lim_{\delta\downarrow 0}\uparrow \inf_{(U_{p})_{p\in\N} \atop \diam{U_{p}}<\delta} \sum_{p=1}^\infty h(\diam{U_{p}})
$$
where the infimum is taken over all sequences $(U_{p})_{p\in\N}$ of sets with $F\subseteq\bigcup_{p}U_{p}$ and $\diam{U_{p}}<\delta$ for all $p\in\N$, where $\diam{\cdot}$ denotes diameter. This is a Borel measure on $\R^d$. The Hausdorff dimension of a nonempty set $F\subseteq\R^d$ is defined by
$$
\dim F=\sup\{s\in (0,d) \:|\: \hau^{r^s}(F)=\infty\}=\inf\{s\in (0,d) \:|\: \hau^{r^s}(F)=0\}
$$
with the convention that $\sup\emptyset=0$ and $\inf\emptyset=d$, see~\cite{Falconer:2003oj,Rogers:1970wb}.

In order to refine the classes of K.~Falconer, we introduced in~\cite{Durand:2007uq} the class $\grint^h(V)$ of sets with large intersection in a given nonempty open subset $V$ of $\R^d$ with respect to a given function $h\in\jauge_{d}$. Contrary to the classes of K.~Falconer, the class $\grint^h(V)$ is defined using outer net measures rather than similarities. Specifically, given an integer $c\geq 2$, let $\Lambda_{c}$ denote the collection of all $c$-adic cubes of $\R^d$, i.e. sets of the form $\lambda=\lambda^c_{j,k}=c^{-j}(k+[0,1)^d)$ where $j\in\Z$ and $k\in\Z^d$. The integer $j$ is called the generation of $\lambda$ and is denoted by $\gene{\lambda}_{c}$. The outer net measure associated with $h$ is given by
$$
\forall F\subseteq\R^d \qquad \netm^h_{\infty}(F) = \inf_{(\lambda_{p})_{p\in\N}\in R_{c,h}(F)} \sum_{p=1}^\infty h(\diam{\lambda_{p}})
$$
where $R_{c,h}(F)$ is the set of all sequences $(\lambda_{p})_{p\in\N}$ in $\Lambda_{c}\cup\{\emptyset\}$ such that $F$ is included in $\bigcup_{p}\lambda_{p}$ and $\diam{\lambda_{p}}$ is less than $\eps_{h}$, which is the supremum of all $\eps\in (0,1]$ such that $h$ is continuous and nondecreasing on $[0,\eps]$ and $r\mapsto h(r)/r^d$ is positive and nonincreasing on $(0,\eps]$. Moreover, recall that a $G_{\delta}$-set is one that may be expressed as a countable intersection of open sets. The class $\grint^h(V)$ is then defined as follows.

\begin{df}
Let $h\in\jauge_{d}$ and let $V$ be a nonempty open subset of $\R^d$. The class $\grint^h(V)$ of subsets of $\R^d$ with large intersection in $V$ with respect to $h$ is the collection of all $G_{\delta}$-subsets $F$ of $\R^d$ such that $\netm^g_{\infty}(F\cap U)=\netm^g_{\infty}(U)$ for every $g\in\jauge_{d}$ enjoying $g\prec h$ and every open set $U\subseteq V$.
\end{df}

The class $\grint^h(V)$ enjoys the same kind of stability properties as the classes of K.~Falconer, as shown by the following result of~\cite{Durand:2007uq}. It is also proven in the same paper that $\grint^h(V)$ depends on the choice of neither the integer $c$ nor the norm $\R^d$ is endowed with.

\begin{thm}\label{GRINTSTABLE}
Let $h\in\jauge_{d}$ and let $V$ be a nonempty open subset of $\R^d$. Then
\begin{itemize}
\item the class $\grint^h(V)$ is closed under countable intersections;
\item the set $f^{-1}(F)$ belongs to $\grint^h(V)$ for every bi-Lipschitz mapping $f:V\to\R^d$ and every set $F\in\grint^h(f(V))$;
\item every set $F\in\grint^h(V)$ enjoys $\hau^g(F)=\infty$ for every $g\in\jauge_{d}$ with $g\prec h$ and in particular $\dim F\geq s_{h}=\sup\left\{s\in (0,d) \:|\: r^s\prec h \right\}$.
\end{itemize}
\end{thm}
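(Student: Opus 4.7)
The plan is to prove the three parts in the order (c), (b), (a), using throughout a key interpolation property of $\jauge_{d}$: whenever $g\prec h$ there exist intermediate gauges, and indeed infinite chains $g\prec g_{1}\prec g_{2}\prec\cdots\prec h$, in $\jauge_{d}$. This lets multiplicative losses at fine scales be absorbed for free by the divergence of the relevant gauge ratio at zero.

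For (c), fix $g\prec h$ and interpolate $g\prec \hat{g}\prec h$. For any $\delta$-cover $(U_{p})$ of $F$,
\[
\sum_{p}g(\diam{U_{p}})\geq\Bigl(\inf_{0<r<\delta}\frac{g(r)}{\hat{g}(r)}\Bigr)\sum_{p}\hat{g}(\diam{U_{p}}),
\]
and the right-hand sum dominates, up to a dimension-dependent constant, the net content $\netm^{\hat{g}}_{\infty}(F\cap V)$, which equals $\netm^{\hat{g}}_{\infty}(V)>0$ by the defining property of $\grint^{h}(V)$ applied with $U=V$. Since $g/\hat{g}\to\infty$ at zero, letting $\delta\downarrow 0$ forces $\hau^{g}(F)=\infty$; the bound $\dim F\geq s_{h}$ follows at once from $r^{s}\prec h$ for every $s<s_{h}$.

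For (b), the set $f^{-1}(F)$ is $G_{\delta}$ by continuity of $f$ and openness of $V$. Given $g\prec h$ and an open $U\subseteq V$, interpolate $g\prec \hat{g}\prec h$. The bi-Lipschitz bounds on $f$ turn any $c$-adic cover of $f(U)$ into a cover of $U$ by sets of comparable diameter, refineable into a $c$-adic cover at the cost of a constant $C=C(\ell,L,c,d)$. This constant is absorbed at fine scales by the gap between $g$ and $\hat{g}$, combined with the monotonicity of $r\mapsto \hat{g}(r)/r^{d}$; chaining the resulting inequality with the defining identity $\netm^{\hat{g}}_{\infty}(F\cap f(U))=\netm^{\hat{g}}_{\infty}(f(U))$, and its symmetric counterpart, yields $\netm^{g}_{\infty}(f^{-1}(F)\cap U)=\netm^{g}_{\infty}(U)$.

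Part (a) is the main obstacle. Writing $F=\bigcap_{n}F_{n}$, the set $F$ is $G_{\delta}$, so only the measure identity remains to be checked for every $g\prec h$ and every open $U\subseteq V$. The plan is modeled on K.~Falconer's original proof for the classes $\grintfalc^{s}$: interpolate an infinite chain $g\prec g_{1}\prec g_{2}\prec\cdots\prec h$ in $\jauge_{d}$, and use the $G_{\delta}$-structure of each $F_{n}$ to extract, iteratively, refinements of a given almost-optimal $c$-adic cover $(\lambda_{p})$ of $F\cap U$ by invoking the defining identity for $F_{n}$ with the gauge $g_{n}$ at successively finer scales. Assembling these layered refinements with $(\lambda_{p})$ produces a $c$-adic cover of $U$ whose total $g$-mass is within $O(\eps)$ of $\sum_{p}g(\diam{\lambda_{p}})$, and taking the infimum over $(\lambda_{p})$ followed by $\eps\downarrow 0$ then yields $\netm^{g}_{\infty}(U)\leq\netm^{g}_{\infty}(F\cap U)$, the reverse inequality being trivial. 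The real difficulty, and the reason both the $G_{\delta}$-hypothesis and the strict order $\prec$ are indispensable, is that the net content is only countably subadditive: the bookkeeping must force the $n$-th layer of refinements to live at diameters so small that the gauge gap dominates every geometric constant incurred during the construction, so that the layered patches contribute a summable $g$-mass.
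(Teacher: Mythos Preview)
The paper does not contain its own proof of this theorem: it is stated in Section~2 as ``the following result of~\cite{Durand:2007uq}'' and is simply quoted from that reference without argument. There is therefore nothing in the present paper to compare your proposal against line by line.

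That said, your sketch is headed in the right direction and matches the strategy actually used in~\cite{Durand:2007uq} (which in turn adapts Falconer's argument from~\cite{Falconer:1994hx} to general gauge functions). The crucial device you identify---interpolating gauges $g\prec\hat g\prec h$ (and, for part~(a), an infinite chain $g\prec g_{1}\prec g_{2}\prec\cdots\prec h$) so that bi-Lipschitz and covering constants are swallowed by the divergence of the ratio at zero---is exactly the mechanism that makes the proofs work. Your ordering (c), (b), (a) and your diagnosis that (a) is the delicate part, requiring both the $G_{\delta}$-hypothesis and the strict relation~$\prec$, are also correct.

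What your proposal leaves genuinely open is the execution of~(a): you describe the bookkeeping in words (``layered refinements'', ``force the $n$-th layer to live at diameters so small\ldots'') but do not carry it out. In~\cite{Durand:2007uq} this step is handled by first reducing the problem to $c$-adic cubes (their Lemma~10: it suffices to check $\netm^{g}_{\infty}(F\cap\lambda)\geq\netm^{g}_{\infty}(\lambda)$ for every sufficiently small $c$-adic cube~$\lambda$) and then performing the iterative construction cube by cube. If you intend to write a full proof rather than a plan, that reduction and the explicit inductive construction are the pieces you would still need to supply.
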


By Theorem~\ref{GRINTSTABLE}, every sequence $(F_{n})_{n\in\N}$ in $\grint^h(V)$ enjoys $\hau^g(\bigcap_{n}F_{n})=\infty$ for every $g\in\jauge_{d}$ with $g\prec h$. In addition, $\grint^h(\R^d)$ is included in the class $\grintfalc^{s_{h}}(\R^d)$ of K.~Falconer if $s_{h}$ is positive. More precisely, each $F\in\grint^h(\R^d)$ satisfies $\hau^g(\bigcap_{n} f_{n}(F))=\infty$ for every $g\in\jauge_{d}$ with $g\prec h$ and every sequence $(f_{n})_{n\in\N}$ of similarities. Proposition~\ref{HAUSIMDCGRINT} below provides a partial converse to this result. In its statement, $D_{c}$ denotes the set of all dilations that map a $c$-adic cube $\lambda$ with diameter less than $1$ to a $c$-adic cube with generation greater than or equal to that of $\lambda$. Moreover, $\Phi$ is the set of all functions $\ph$ that are continuous and nondecreasing on $[0,\rho]$ for some $\rho>0$, vanish at zero and are such that $r^{-\ph(r)}$ monotonically tends to infinity as $r\to 0$ and such that $r\mapsto r^{\eps-\ph(r)}$ tends to zero at zero and increases in $(0,\rho_{\eps}]$ for some $\rho_{\eps}>0$ and any $\eps>0$. Thus, for every $\beta\in (0,d]$ and every $\ph\in\Phi$, the function $h_{\beta,\ph}:r\mapsto r^{\beta-\ph(r)}$ belongs to $\jauge_{d}$ and the class $\grint^{h_{\beta,\ph}}(\R^d)$ is strictly contained in $\grintfalc^{\beta}(\R^d)$.

\begin{prp}\label{HAUSIMDCGRINT}
Let $\beta\in (0,d]$, let $\ph\in\Phi$ and let $F$ be a $G_{\delta}$-subset of $\R^d$ such that
\begin{equation}\label{hauhcapfnfpos}
\hau^{h_{\beta,\ph}}\left( \bigcap_{n=1}^\infty f_{n}(F) \right)>0
\end{equation}
for every sequence $(f_{n})_{n\in\N}$ in $D_{c}$. Then $F$ belongs to $\grint^{h_{\beta,\ph}}(\R^d)$.
\end{prp}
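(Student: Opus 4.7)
The objective is to verify the defining property of $\grint^{h_{\beta,\varphi}}(\R^d)$: namely $\netm^g_\infty(F\cap U)=\netm^g_\infty(U)$ for every $g\in\jauge_d$ with $g\prec h_{\beta,\varphi}$ and every open $U\subseteq\R^d$, only the inequality $\geq$ being non-trivial. Using the $\sigma$-subadditivity of $\netm^g_\infty$ together with the identity $\netm^g_\infty(\mu)=g(|\mu|)$ for a $c$-adic cube $\mu$ (which follows from the monotonicity of $r\mapsto g(r)/r^d$, since any $c$-adic sub-covering of $\mu$ automatically has $g$-sum at least $g(|\mu|)$), I would reduce matters to proving $\netm^g_\infty(F\cap\mu)\geq g(|\mu|)$ for every $c$-adic cube $\mu$ and every $g\prec h_{\beta,\varphi}$.

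I would then argue by contrapositive: suppose that for some $c$-adic cube $\mu$ and some gauge $g\prec h_{\beta,\varphi}$ there is a cover $(\lambda_p)_{p\in\N}$ of $F\cap\mu$ by $c$-adic sub-cubes of $\mu$ with $\sum_p g(|\lambda_p|)<g(|\mu|)-\theta$ for some $\theta>0$. For each $p$, let $\psi_p\in D_c$ be the contracting dilation sending $\mu$ onto $\lambda_p$, and enumerate as $(f_\ell)_{\ell\in\N}$ the countable family of dilations $T_k\circ\psi_{p_1}\circ\cdots\circ\psi_{p_n}$, where $T_k$ is translation by $k\in\Z^d$ (each of these lies in $D_c$). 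Recursively propagating the inclusion $F\cap\mu\subseteq\bigcup_q\lambda_q$ through the compositions $\Psi_{\bar p}=\psi_{p_1}\circ\cdots\circ\psi_{p_n}$ produces, at every level $n$ and every $k\in\Z^d$, a cover of $\bigcap_\ell f_\ell(F)\cap(\mu+k)$ by the cubes $T_k\circ\Psi_{\bar p}(\mu)$ with $|\bar p|=n$, whose diameters are bounded by $|\mu|c^{-n}\to 0$ uniformly in $\bar p$.

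The quantitative core is then a geometric-decay bound $\sum_{|\bar p|=n}g(|\Psi_{\bar p}(\mu)|)\leq g(|\mu|)\rho^n$ with $\rho:=(g(|\mu|)-\theta)/g(|\mu|)\in(0,1)$: this is immediate when $g$ is a pure power, and in general it is recovered by routing $g$ through an intermediate gauge $h_{\beta,\psi}$ with $g\prec h_{\beta,\psi}\prec h_{\beta,\varphi}$ and $\psi\in\Phi$, on which the multiplicative behaviour of $r\mapsto r^{\beta-\psi(r)}$ can be tamed using the defining properties of $\Phi$---the slowly-varying factor $r^{-\psi(r)}$ only perturbs the power-law scaling by factors that sum to convergent corrections across the tree of words. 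Because $g\prec h_{\beta,\varphi}$ forces $h_{\beta,\varphi}/g$ to vanish monotonically at zero while the level-$n$ diameters tend uniformly to zero, the $h_{\beta,\varphi}$-sum at level $n$ is bounded by $o(1)\cdot g(|\mu|)\rho^n\to 0$. Summing over the finitely many translates $k\in\Z^d$ needed to meet any given compact set $K\subset\R^d$ yields $\hau^{h_{\beta,\varphi}}(\bigcap_\ell f_\ell(F)\cap K)=0$, hence $\hau^{h_{\beta,\varphi}}(\bigcap_\ell f_\ell(F))=0$, contradicting~(\ref{hauhcapfnfpos}). The main obstacle is precisely this last quantitative step: making the geometric-decay bound rigorous for a general gauge (as opposed to a pure power) and controlling the $\varphi$-logarithmic correction $r^{-\varphi(r)}$ uniformly through the iteration, which is what forces the two-tier passage $g\prec h_{\beta,\psi}\prec h_{\beta,\varphi}$ and a careful use of the structural constraints on $\Phi$.
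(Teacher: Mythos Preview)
Your overall architecture---contrapositive, build an iterated family of $c$-adic covers via compositions of the dilations $\psi_p$, translate over $\Z^d$, and deduce $\hau^{h_{\beta,\varphi}}(\bigcap_n f_n(F))=0$---matches the paper's. The divergence is in the reduction and in the engine of the iteration, and the latter is where your argument has a genuine gap.

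The paper does \emph{not} verify the definition of $\grint^{h_{\beta,\varphi}}(\R^d)$ for every $g\prec h_{\beta,\varphi}$. It first invokes a lemma (Lemma~10 of \cite{Durand:2007uq}) reducing everything to the single gauge $h_{\beta,\varphi}$: one only needs $\netm^{h_{\beta,\varphi}}_\infty(F\cap\lambda)=\netm^{h_{\beta,\varphi}}_\infty(\lambda)$ for small $c$-adic cubes $\lambda$. The contrapositive then produces a cover with $\sum_p h_{\beta,\varphi}(|\lambda_p|)\le\alpha\, h_{\beta,\varphi}(|\lambda|)$, and the whole iteration is carried out in the \emph{single} gauge $h_{\beta,\varphi}$. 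The crucial technical input is the quasi-multiplicativity
\[
\frac{h_{\beta,\varphi}(\kappa r_1)}{h_{\beta,\varphi}(\kappa r_2)}\le (1+\gamma)\,\frac{h_{\beta,\varphi}(r_1)}{h_{\beta,\varphi}(r_2)}
\qquad (r_1\le r_2,\ \kappa\ \text{small}),
\]
which follows from $\varphi\in\Phi$ and is precisely what makes each iteration step contract by a factor $(1+\gamma)\alpha<1$. The paper also packages the iteration through an auxiliary outer measure $m_\kappa$ (covers by $c$-adic subcubes of $\lambda$ of diameter $<\kappa|\lambda|$), letting $\kappa\to 0$ at the end; this device absorbs the $(1+\gamma)$ losses.

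Your version instead starts from a $g$-efficient cover for an \emph{arbitrary} $g\prec h_{\beta,\varphi}$ and asserts the geometric decay $\sum_{|\bar p|=n} g(|\Psi_{\bar p}(\mu)|)\le g(|\mu|)\rho^n$. This is exactly the step that fails without structure on $g$: a general gauge has no submultiplicativity, and the ``routing through $h_{\beta,\psi}$'' that you propose does not yield decay of $g$-sums. At best it yields decay of $h_{\beta,\psi}$-sums (since the monotonicity of $g/h_{\beta,\psi}$ lets you pass from a $g$-efficient cover to an $h_{\beta,\psi}$-efficient one, and then $h_{\beta,\psi}$ enjoys the quasi-multiplicativity above), but then your subsequent sentence---bounding the $h_{\beta,\varphi}$-sum by $o(1)\cdot g(|\mu|)\rho^n$ via $h_{\beta,\varphi}/g\to 0$---no longer applies, because you never controlled $g$-sums. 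You would also need to justify the existence of $\psi\in\Phi$ with $g\prec h_{\beta,\psi}\prec h_{\beta,\varphi}$, which is not automatic for arbitrary $g\in\jauge_d$. The clean fix is to drop the detour through general $g$ altogether: reduce to the single gauge $h_{\beta,\varphi}$ first (as the paper does), identify and prove the quasi-multiplicativity inequality above for $h_{\beta,\varphi}$, and run the iteration directly in that gauge.
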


We refer to Section~\ref{grintheterproofsets} for a proof of this result. The gauge functions $h_{\beta,\ph}$, for $\beta\in (0,d]$ and $\ph\in\Phi$, are precisely those which arise in the study of the large intersection properties of the sets introduced by J.~Barral and S.~Seuret in~\cite{Barral:2004ae}, see Section~\ref{grintheterubiq}. As an example of functions of the form $h_{\beta,\ph}$, let us mention
$$
r\mapsto r^{\beta}\prod_{p=1}^\infty \left(\log^{\circ p}\frac{1}{r}\right)^{\nu_{p}}
$$
where $\log^{\circ p}$ stands for the $p$-th iterate of the logarithm, $\beta\in (0,d]$ and $(\nu_{p})_{p\in\N}$ denotes a sequence in $[0,\infty)$ such that all $\nu_{p}$ vanish except a finite number of them.

\section{Heterogeneous ubiquity}\label{grintheterubiq}

We begin by recalling the notion of homogeneous ubiquitous system introduced in~\cite{Durand:2007uq}. Given a denumerable set $I$, let $\S^0_{d}(I)$ denote the set of all families $(x_{i},r_{i})_{i\in I}$ in $[0,1]^d\times (0,\infty)$ such that, for every $\eps>0$, there are at most finitely many $i\in I$ enjoying $r_{i}>\eps$. A family $(x_{i},r_{i})_{i\in I}\in\S^0_{d}(I)$ is a homogeneous ubiquitous system in $(0,1)^d$ if for Lebesgue-almost every $x\in [0,1]^d$, there are infinitely many $i\in I$ enjoying $\|x-x_{i}\|\leq r_{i}$. Examples of homogeneous ubiquitous systems include the rational numbers, the real algebraic numbers of bounded degree, the algebraic integers of bounded degree and, more generally, the {\em optimal regular systems} which are very common in the theory of Diophantine approximation, see~\cite{Durand:2007uq}.

The notion of heterogeneous ubiquitous system was introduced by J.~Barral and S.~Seuret in~\cite{Barral:2004ae}. Let us endow $\R^d$ with the supremum norm. To be a heterogeneous ubiquitous system, a family $(x_{i},r_{i})_{i\in I}\in\S^0_{d}(I)$ must satisfy four assertions which we detail now. Let $\mesb$ be the set of all finite Borel measures with support $[0,1]^d$ and let $m\in\mesb$. The first assertion is:\renewcommand{\theenumi}{\roman{enumi}}
\begin{enumerate}
\item\label{condcov} For $m$-almost every $x\in [0,1]^d$, there are infinitely many $i\in I$ such that $\|x-x_{i}\|\leq r_{i}/2$.
\end{enumerate}

Let us mention some examples of families for which~(\ref{condcov}) holds. Given an integer $c\geq 2$, let $\I_{d,c}$ be the set of all pairs $(j,k)$ with $j\in\N$ and $k\in\intn{0}{c^j}^d$. For each $x\in [0,1]^d$ there are infinitely many $(j,k)\in\I_{d,c}$ such that $\|x-kc^{-j}\|\leq c^{-j}/2$. Thus,~(\ref{condcov}) holds for the family $(kc^{-j},c^{-j})_{(j,k)\in\I_{d,c}}\in\S^0_{d}(\I_{d,c})$ and for any $m\in\mesb$.

Likewise, let $\I_{d,{\rm rat}}$ be the set of all pairs $(p,q)$ with $q\in\N$ and $p\in\intn{0}{q}^d$. By Dirichlet's theorem,~(\ref{condcov}) holds for the family $(p/q,2q^{-1-1/d})_{(p,q)\in\I_{d,{\rm rat}}}\in\S^0_{d}(\I_{d,{\rm rat}})$ along with any $m\in\mesb$, see~\cite[Theorem~200]{Hardy:1979fk}. Furthermore, given $x\in\R^d\backslash\Q^d$, let $\kappa(x)$ be the infimum of all $\kappa\in (0,1]$ such that $\|x-p/q\|<\kappa^{1/d}q^{-1-1/d}$ holds for infinitely many $(p,q)\in\Z^d\times\N$ where $p/q$ has at least one irreducible coordinate and let $\gamma_{d}$ be the supremum of those $\kappa(x)$. Hurwitz~\cite{Hurwitz:1891uq} showed that $\gamma_{1}=1/\sqrt{5}$, see~\cite[Theorem~193]{Hardy:1979fk}. Let $\I_{1,{\rm rat}}^*$ be the set of all pairs $(p,q)$ with $q\in\N$ and $p\in\intn{1}{q-1}$ such that $p/q$ is irreducible. As $\gamma_{1}<1/2$,~(\ref{condcov}) holds in dimension $1$ for the family $(p/q,1/q^2)_{(p,q)\in\I_{1,{\rm rat}}^*}\in\S^0_{1}(\I_{1,{\rm rat}}^*)$ along with any $m\in\mesb$ enjoying $m(\Q)=0$. We shall use this result in order to prove Corollary~\ref{grintbarralseuret}, see Section~\ref{grintheterapplic}. Note that various bounds on $\gamma_{d}$ have been established in the case where $d\geq 2$ but its exact value is still unknown, see~\cite[Section~2.23]{Finch:2003kx}.

Let us define the three other assertions. Consider an integer $c\geq 2$. Given $x\in\R^d$ and $j\in\Z$, let $\lambda^c_{j}(x)$ denote the unique $c$-adic cube with generation $j$ that contains $x$. Moreover, write $3\lambda$ for the cube obtained by expanding any $\lambda\in\Lambda_{c}$ by a factor $3$ about its center. The second assertion states that the local behavior of a given measure $m\in\mesb$ is controlled $m$-almost everywhere by the function $h_{\beta,\ph}\in\jauge_{d}$ for some $\beta\in (0,d]$ and some $\ph\in\Phi$:\begin{enumerate}\setcounter{enumi}{1}
\item\label{condm} For $m$-almost every $x\in [0,1]^d$, there is an integer $j(x)$ such that, for every integer $j\geq j(x)$ and every $k\in\intn{0}{c^j-1}^d$,
$$
\lambda^c_{j,k} \subseteq 3\lambda^c_{j}(x) \quad\Longrightarrow\quad m(\lambda^c_{j,k})\leq h_{\beta,\ph}(\diam{\lambda^c_{j,k}}).
$$
\end{enumerate}

Let $\Psi$ be the set of all functions $\psi$ that are continuous and nondecreasing on $[0,\rho]$ for some $\rho>0$, vanish at zero and are such that $r\mapsto r^{-\psi(r)}$ is nonincreasing on $(0,\rho]$. The third assertion indicates that a measure $m\in\mesb$ focuses on the points where a given measure $\mu\in\mesb$ follows a power-law behavior with exponent $\alpha>0$ up to a correction $\psi\in\Psi$:\begin{enumerate}\setcounter{enumi}{2}
\item\label{condmu} For $m$-almost every $x\in [0,1]^d$, there is an integer $j(x)$ such that, for every integer $j\geq j(x)$ and every $k\in\intn{0}{c^j-1}^d$,
$$
\lambda^c_{j,k} \subseteq 3\lambda^c_{j}(x) \quad\Longrightarrow\quad \diam{\lambda^c_{j,k}}^{\alpha+\psi(\diam{\lambda^c_{j,k}})} \leq \mu(\lambda^c_{j,k}) \leq \diam{\lambda^c_{j,k}}^{\alpha-\psi(\diam{\lambda^c_{j,k}})}.
$$
\end{enumerate}

The last assertion imposes a self-similarity condition on a measure $m\in\mesb$. To be specific, given a $c$-adic cube $\lambda$ with nonnegative generation, let $\omega_{\lambda}$ denote the dilation that maps $\lambda$ to the unique cube of vanishing generation that contains it. Moreover, for any $c$-adic cube $\lambda\subseteq [0,1)^d$, let $m^\lambda=m\circ\omega_{\lambda}$. The last assertion is:
\begin{enumerate}\setcounter{enumi}{3}
\item\label{condselfsim} For every $c$-adic cube $\lambda\subseteq [0,1)^d$, the measure $m^\lambda$ and the restriction $m\restr{\lambda}$ of $m$ to $\lambda$ are absolutely continuous with respect to one another.
\end{enumerate}\renewcommand{\theenumi}{\arabic{enumi}}

Two important examples of measures for which~(\ref{condm})-(\ref{condselfsim}) hold are discussed in Section~\ref{grintheterapplic}: the products of multinomial measures and the Gibbs measures associated with a H\"older continuous function.

\begin{df}
Let $I$ be a denumerable set. Any family $(x_{i},r_{i})_{i\in I}\in\S^0_{d}(I)$ is called a heterogeneous ubiquitous system with respect to $(\mu,\alpha,\beta,\ph)\in\mesb\times (0,\infty)\times (0,d]\times\Phi$ if~(\ref{condcov})-(\ref{condselfsim}) hold for some $\psi\in\Psi$, some $m\in\mesb$ and some integer $c\geq 2$.
\end{df}

The homogeneous ubiquitous systems of~\cite{Durand:2007uq} are a particular case of the heterogeneous ones. Indeed, given a denumerable set $I$, consider a family $(x_{i},r_{i})_{i\in I}\in\S^0_{d}(I)$ which is a homogeneous ubiquitous system in $(0,1)^d$. Proposition~15 in~\cite{Durand:2007uq} implies that~(\ref{condcov}) holds when $m$ is the Lebesgue measure on $[0,1]^d$, which is denoted by $\leb^d$. It is then straightforward to establish that $(x_{i},r_{i})_{i\in I}$ is a heterogeneous ubiquitous system with respect to $(\leb^d,d,d,\ph)$ for every $\ph\in\Phi$.

Let $I$ denote a denumerable set and let $(x_{i},r_{i})_{i\in I}$ be a heterogeneous ubiquitous system with respect to $(\mu,\alpha,\beta,\ph)\in\mesb\times (0,\infty)\times (0,d]\times\Phi$. Thus,~(\ref{condcov})-(\ref{condselfsim}) hold for some $\psi\in\Psi$, some $m\in\mesb$ and some integer $c\geq 2$. Moreover, for any $M\in (0,\infty)$, let $I^{\mu,\alpha}_{M,\psi}$ be the set of all $i\in I$ such that
\begin{equation}\label{ballsimcond}
M^{-1} (2r_{i})^{\alpha+\psi(2r_{i})} \leq \mu\left( B(x_{i},r_{i}) \right) \leq \mu\left( \bar B(x_{i},r_{i}) \right) \leq M (2r_{i})^{\alpha-\psi(2r_{i})}
\end{equation}
where $B(x_{i},r_{i})$ and $\bar B(x_{i},r_{i})$ are the open and the closed balls with center $x_{i}$ and radius $r_{i}$ respectively. Given $t\in [1,\infty)$, J.~Barral and S.~Seuret investigated the size properties of the set
\begin{equation}\label{tildeetlimsupballsim}
\tilde E_{t}=\left\{ x\in [0,1]^d \:\bigl|\: \|x-x_{i}\|<{r_{i}}^t\text{ for infinitely many }i\in I^{\mu,\alpha}_{M,\psi} \right\}
\end{equation}
of all points in $[0,1]^d$ that lie infinitely often in an open ball $B(x_{i},{r_{i}}^t)$ when $i$ is such that the $\mu$-mass of $B(x_{i},r_{i})$ behaves as ${r_{i}}^\alpha$. Specifically, they established that for some $M\in (0,\infty)$ and every $t\in [1,\infty)$, the Hausdorff dimension of $\tilde E_{t}$ is at least $\beta/t$, see~\cite[Theorem~2.7]{Barral:2004ae}. Thus, the same property holds for
\begin{equation}\label{etlimsupballsim}\begin{split}
E_{t} &= \Z^d+\tilde E_{t} \\
&= \left\{ x\in \R^d \:\bigl|\: \|x-p-x_{i}\|<{r_{i}}^t\text{ for infinitely many }(i,p)\in I^{\mu,\alpha}_{M,\psi}\times\Z^d \right\}.
\end{split}\end{equation}
On top of that, the sets $\tilde E_{t}$ and $E_{t}$ enjoy a large intersection property, as shown by the following theorem which is proven in Section~\ref{grintheterproofubiq}.

\begin{thm}\label{UBIQUITYHETER}
Let $I$ be a denumerable set and let $(x_{i},r_{i})_{i\in I}$ be a heterogeneous ubiquitous system with respect to $(\mu,\alpha,\beta,\ph)\in\mesb\times (0,\infty)\times (0,d]\times\Phi$. Then, for some $\psi\in\Psi$ and some $M\in [1,\infty)$:\begin{itemize}
\item $\tilde E_{1}\in\grint^{h_{\beta,\ph}}((0,1)^d)$ and $E_{1}\in\grint^{h_{\beta,\ph}}(\R^d)$;
\item $\tilde E_{t}\in\grint^{h_{\beta/t,2\ph+\phi}}((0,1)^d)$ and $E_{t}\in\grint^{h_{\beta/t,2\ph+\phi}}(\R^d)$ for all $t\geq 1$ and $\phi\in\Phi$.
\end{itemize}
\end{thm}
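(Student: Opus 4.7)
The overall plan is to reduce the theorem to a Hausdorff-measure statement via Proposition~\ref{HAUSIMDCGRINT}, and then to obtain that statement by means of an iterative Cantor-like construction driven by the four heterogeneous ubiquity conditions. Because $E_{t}=\Z^d+\tilde E_{t}$ is $\Z^d$-periodic, both parts of the theorem reduce to bounding Hausdorff measures of subsets of a single cube, with the open-set version of the $\grint^h$ criterion handling the fact that $\tilde E_{t}$ lives in $(0,1)^d$. Thus it suffices to prove, for every sequence $(f_{n})_{n\in\N}$ in $D_{c}$ and every nonempty $c$-adic cube $U\subseteq (0,1)^d$, that
\[
\hau^{h_{\beta/t,\,2\ph+\phi}}\!\left(U\cap\bigcap_{n=1}^\infty f_{n}(\tilde E_{t})\right)>0,
\]
with the exponent $h_{\beta,\ph}$ instead of $h_{\beta/t,2\ph+\phi}$ when $t=1$.

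First I would unpack the action of $f_{n}\in D_{c}$: each such map is a contractive dilation sending a $c$-adic cube $\lambda_{n}$ into a $c$-adic cube $f_{n}(\lambda_{n})$ of generation at least $\gene{\lambda_{n}}_{c}$. Combined with condition~(\ref{condselfsim}), this means that the pushforward of $m\restr{\lambda_{n}}$ under $\omega_{\lambda_{n}}\circ f_{n}^{-1}$ is mutually absolutely continuous with $m$. Hence the covering property (\ref{condcov}) survives the change of coordinates: $m$-almost every point of any $c$-adic cube $\lambda$ is infinitely often within $r_{i}/2$ of some $x_{i}$, and likewise inside each $f_{n}(\lambda_{n})$. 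A standard consequence, already used by Barral-Seuret, is that for some $M\in[1,\infty)$ the sub-index $I^{\mu,\alpha}_{M,\psi}$ is still large enough for this covering to hold, once we combine (\ref{condcov}) with (\ref{condmu}) and a Borel-Cantelli-type truncation on $i$.

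The core of the proof is the stage-by-stage construction of a generalized Cantor set $K$ inside $U\cap\bigcap_{n} f_{n}(\tilde E_{t})$. At stage $k$, inside each retained $c$-adic cube, I would select finitely many balls $B(x_{i},{r_{i}}^t)$ with $i\in I^{\mu,\alpha}_{M,\psi}$ such that (a) the dilated balls $B(x_{i},r_{i}/2)$ have accumulated $m$-mass at least a fixed fraction of the parent cube's $m$-mass, and (b) the selected configuration simultaneously respects the images $f_{1}(\tilde E_{t}),\dots,f_{k}(\tilde E_{t})$, the absolute continuity from (\ref{condselfsim}) being what allows this diagonal matching. A natural probability measure $\nu$ is then placed on $K$ by distributing the $m$-mass self-similarly. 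Conditions (\ref{condm}) and (\ref{condmu}) give, for small $r>0$ and $x\in K$, bounds of the form $\nu(B(x,r))\lesssim h_{\beta/t,\,2\ph+\phi}(r)$: the factor $1/t$ comes from the ratio between the radius $r_{i}^{t}$ of the selected ball and the controlling scale $r_{i}$ on which (\ref{condm}) is applied, the coefficient $2\ph$ from iterating (\ref{condm}) across two consecutive generations when interpolating between $r_{i}^{t}$ and $r_{i}$, and the auxiliary $\phi\in\Phi$ from the slack needed to absorb the $\psi$-perturbations produced by (\ref{condmu}). The mass distribution principle then yields $\hau^{h_{\beta/t,2\ph+\phi}}(K)>0$, and the $t=1$ case uses (\ref{condm}) directly without the interpolation, giving the cleaner gauge $h_{\beta,\ph}$.

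The main obstacle I anticipate is this simultaneous construction across all $f_{n}$. A single-$f_{n}$ Cantor construction in the spirit of~\cite{Barral:2004ae} handles each $f_{n}(\tilde E_{t})$ in isolation, but here the retained cubes at stage $k$ must fall inside the intersection with the first $k$ dilated copies, and one must keep a uniform lower bound on the total $m$-mass of surviving cubes at every stage. The combinatorial engine for this is a Vitali-type finite selection at each scale together with a diagonal enumeration of the dilations, relying crucially on (\ref{condselfsim}) to transport (\ref{condcov}) into every relevant $c$-adic cube. A secondary technical point is the exact verification that the $\psi$ produced by (\ref{condmu}) can always be absorbed into the prescribed correction $2\ph+\phi$ (or into $\ph$ when $t=1$), which ultimately fixes the value of $M$ and of $\psi$ in the statement.
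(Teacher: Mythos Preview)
Your outline is essentially the paper's own strategy: reduce via Proposition~\ref{HAUSIMDCGRINT} to a Hausdorff-measure lower bound for $\bigcap_n f_n(E_t)$, handle all $f_n$ simultaneously by a diagonal enumeration, build a Cantor set using a Besicovitch/Vitali selection at each stage, and conclude with the mass distribution principle. Two points in your sketch are off, however, and would cause trouble in the execution.

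First, your attribution of the corrections $2\ph$ and $\phi$ is wrong. Condition~(\ref{condmu}) and the function $\psi$ are used \emph{only} to certify that the selected indices lie in $I^{\mu,\alpha}_{M,\psi}$ (this is Lemma~\ref{lemfXSfSigma}); $\psi$ never enters the scaling estimates for the Cantor measure $\pi$, so there is nothing for $\phi$ to ``absorb'' on that front. The extra $\phi\in\Phi$ is introduced to swallow the \emph{multiplicative constants} (of the type $M\kappa^{\beta/t}\,4Q(d)/|m|$, together with scale-dependent factors carried from the parent cube) that accrue at every stage of the construction; see how $\phi$ enters through~(\ref{majcrocetap1}) and~(\ref{majcrocetap2}). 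The doubling to $2\ph$ comes from combining the $\ph$ in the $m^\lambda$-bound on each ball $\tilde\nu$ with a second $\ph$ appearing when one estimates $\pi(B)$ for a generic small ball $B$ by comparing $\diam{B}$ to the scale of the surrounding construction cube.

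Second, the case $t=1$ requires no Cantor construction at all. Lemmas~\ref{fXSfSigmamespleine} and~\ref{lemfXSfSigma} already give that $(0,1)^d\cap f_n(X)\cap S\cap f_n(\Sigma)\subseteq f_n(E_1^{\weak})$ has full $m$-measure for each $n$, so $\bigcap_n f_n(E_1^{\weak})$ contains a set of full $m$-measure on which $m(B(x,r))\lesssim h_{\beta,\ph}(2r)$ directly from~(\ref{condm}); Lemma~\ref{massdistprinc} with $m$ itself finishes the job. Finally, note that the paper works with $E_t$ (in fact a weak variant $E_t^{\weak}$) rather than $\tilde E_t$, because Proposition~\ref{HAUSIMDCGRINT} is formulated on all of $\R^d$; the statement for $\tilde E_t$ is recovered at the end from $\tilde E_t\cap U=E_t\cap U$ for every open $U\subseteq(0,1)^d$.
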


\begin{rem}
This result is reminiscent of Theorem~2 in~\cite{Durand:2007uq} which discusses the large intersection properties of the sets built on homogeneous ubiquitous systems. Note that it is pointless to apply the previous result rather than Theorem~2 in~\cite{Durand:2007uq} to a homogeneous ubiquitous system (which is also a heterogeneous ubiquitous system). Indeed, let $(x_{i},r_{i})_{i\in I}\in\S^0_{d}(I)$ denote a homogeneous ubiquitous system in $(0,1)^d$. Then $(x_{i},r_{i})_{i\in I}$ is a heterogeneous ubiquitous system with respect to $(\leb^d,d,d,\ph)$ for all $\ph\in\Phi$. Furthermore, $I^{\leb^d,d}_{M,\psi}=I$ for all $\psi\in\Psi$ and $M\in [1,\infty)$. Thus, for all $t\in [1,\infty)$, Theorem~2 in~\cite{Durand:2007uq} ensures that $\tilde E_{t}\in\grint^{r^{d/t}}((0,1)^d)$, while Theorem~\ref{UBIQUITYHETER} above implies that $\tilde E_{t}\in\grint^{r^{d/t-\ph(r)}}((0,1)^d)$ for all $\ph\in\Phi$, which is a weaker result since some gauge functions $h\prec r^{d/t}$ are not of the form $h(r)=r^{d/t-\ph(r)}$.
\end{rem}

Theorem~\ref{UBIQUITYHETER} enables to investigate the size properties of the intersection of a countable number of sets built on heterogeneous ubiquitous systems. Let $J$ denote a countable set and, for each $j\in J$, let $(x^j_{i},r^j_{i})_{i\in I_{j}}$ (where $I_{j}$ is a denumerable set) be a heterogeneous ubiquitous system with respect to $(\mu^j,\alpha^j,\beta^j,\ph^j)\in\mesb\times (0,\infty)\times (0,d]\times\Phi$. Assume that $\inf_{j} \beta^j/t^j>0$. Then, by Theorem~\ref{GRINTSTABLE} and Theorem~\ref{UBIQUITYHETER}, there are a family $(M^j)_{j\in J}$ in $[1,\infty)$ and a family $(\psi^j)_{j\in J}$ in $\Psi$ such that
$$
\bigcap_{j\in J} \tilde E^j_{t^j} \in \bigcap_{h\in\jauge_{d} \atop \forall j, h\prec h_{\beta^j/t^j,2\ph^j+\phi^j}} \grint^h((0,1)^d)
$$
for every family $(\phi^j)_{j\in J}$ in $\Phi$ and every family $(t^j)_{j\in J}$ in $[1,\infty)$, where $\tilde E^j_{t^j}$ is defined as in~(\ref{tildeetlimsupballsim}). As a consequence, the Hausdorff dimension of $\bigcap_{j} \tilde E^j_{t^j}$ is at least $\inf_{j} \beta^j/t^j$. This result can be seen as the analog of Theorem~2 in~\cite{Dodson:1990fc} for sets built on heterogeneous ubiquitous systems.

\section{Applications to metric number theory}\label{grintheterapplic}

In~\cite{Barral:2004ae}, J.~Barral and S.~Seuret gave several examples of heterogeneous ubiquitous systems $(x_{i},r_{i})_{i\in I}\in\S^0_{d}(I)$ (where $I$ is a denumerable set) with respect to some tuple $(\mu,\alpha,\beta,\ph)\in\mesb\times (0,\infty)\times (0,d]\times\Phi$. We review them in this section and we show that the corresponding sets $\tilde E_{t}$ and $E_{t}$ which are defined in Section~\ref{grintheterubiq} enjoy a large intersection property.

\subsection{Products of multinomial measures}

To begin with, let us recall the definition of multinomial measures. Consider an integer $c\geq 2$. Any $c$-adic interval $\lambda=c^{-j}(k+[0,1))\subset [0,1)$ ($j\in\N$, $k\in\intn{0}{c^j-1}$) can be coded by some $u(\lambda)=(u(\lambda)_{1},\ldots,u(\lambda)_{j})$, where $u(\lambda)_{p}=\lfloor k c^{p-j}\rfloor \modu c$ for all $p\in\intn{1}{j}$. Let $\pi=(\pi_{0},\ldots,\pi_{c-1})\in (0,1)^c$ with $\sum_{b}\pi_{b}=1$. Then $\pi$ is called a probability vector. For each $j\in\N$, let
$$
\mu^\pi_{j}({\rm d}x)=\sum_{\lambda\subset [0,1) \atop \gene{\lambda}_{c}=j} c^j \ind_{\lambda}(x) \prod_{p=1}^j \pi_{u(\lambda)_{p}} {\rm d}x
$$
where the sum is over all $c$-adic subintervals of $[0,1)$ with generation $j$. Then $(\mu^\pi_{j})_{j\in\N}$ converges weakly to some Borel probability measure $\mu_{\pi}$ with support $[0,1]$. This measure is called the multinomial measure associated with $\pi$. Its multifractal analysis (that is, the computation of the Hausdorff dimension of the set $V_{\alpha}$, $\alpha\in\R$, of all real numbers $x\in [0,1)$ such that $-\frac{1}{j}\log_{c}\mu_{\pi}(\lambda^c_{j}(x)) \to \alpha$) involves
$$
\forall q\in\R \qquad \tau_{\mu_{\pi}}(q)=-\lim_{j\to\infty} \frac{1}{j}\log_{c}\sum_{\lambda\subset [0,1) \atop \gene{\lambda}_{c}=j} \left( \mu_{\pi}(\lambda) \right)^q=-\log_{c}\sum_{b=0}^{c-1} {\pi_{b}}^q.
$$
Let $q\in\R$, $\alpha=\tau_{\mu_{\pi}}'(q)$ and $\beta=q\tau_{\mu_{\pi}}'(q)-\tau_{\mu_{\pi}}(q)$. Then, studying the multinomial measure $\mu_{\pi,q}$ associated with $c^{\tau_{\mu_{\pi}}(q)}({\pi_{0}}^q,\ldots,{\pi_{c-1}}^q)$, one can show that the Hausdorff dimension of $V_{\alpha}$ is $\beta$, see~\cite{Brown:1992fk}.

Let us turn our attention to products of multinomial measures and investigate the large intersection properties of the corresponding sets $\tilde E_{t}$ and $E_{t}$ which are defined in Section~\ref{grintheterubiq}. Specifically, let $\pi^1,\ldots,\pi^d$ be probability vectors and let $\mu=\mu_{\pi^{1}}\otimes\cdots\otimes\mu_{\pi^d}$. Then $\mu$ is a Borel probability measure with support $[0,1]^d$ and its multifractal analysis involves $\tau_{\mu}=\tau_{\mu_{\pi^1}}+\cdots+\tau_{\mu_{\pi^d}}$. Given $q\in\R$, let $\alpha=\tau_{\mu}'(q)$ and $\beta=q\tau_{\mu}'(q)-\tau_{\mu}(q)$. Moreover, let $m=\mu_{\pi^{1},q}\otimes\cdots\otimes\mu_{\pi^d,q}$, $\ph=f_{\eta}:r\mapsto \left(-\log r\right)^{\eta-1/8}$ and $\psi=f_{\eta'}$, where $\eta,\eta'\in (0,1/8)$. Then~(\ref{condm})-(\ref{condselfsim}) hold, by Theorem~1 in~\cite{Barral:2005uq}. Suppose that~(\ref{condcov}) holds for some family $(x_{i},r_{i})_{i\in I}\in\S^0_{d}(I)$. Thus, this family is a heterogeneous ubiquitous system with respect to $(\mu,\alpha,\beta,\ph)$ and Theorem~\ref{UBIQUITYHETER} ensures that $\tilde E_{t}\in\grint^{h_{\beta/t,3f_{\eta}}}((0,1)^d)$ and $E_{t}\in\grint^{h_{\beta/t,3f_{\eta}}}(\R^d)$ for some $M\in [1,\infty)$ and all $t\in [1,\infty)$.

We now exploit the link between multinomial measures and digits of points in order to show that the points that are approximated by rationals which satisfy a given Besicovitch condition form a set with large intersection. Let $q=1$ in what precedes, so that $\alpha=\beta=\tau_{\mu}'(1)$ and $m=\mu$. Each real number $x$ can be written on the form $x=x_{0}+\sum_{p=1}^\infty x_{p}c^{-p}$ where $x_{0}\in\Z$ and $x_{p}$ ($p\in\N$) belongs to $\intn{0}{c-1}$ and is not asymptotically constantly equal to $c-1$. Consider
$$
\forall b\in\intn{0}{c-1} \quad \forall j\in\N \qquad \sigma_{b,j}(x)=\frac{1}{j}\#\left\{ p\in\intn{1}{j} \:|\: x_{p}=b \right\}.
$$
The law of large numbers implies that the set of all points $x=(x^1,\ldots,x^d)\in [0,1]^d$ such that $\sigma_{b,j}(x^s)\to\pi^s_{b}$ for all $s\in\intn{1}{d}$ and $b\in\intn{0}{c-1}$ has full $\mu$-measure. As a result, we deduce that its Hausdorff dimension is at least
\begin{equation}\label{dimeggleston}
\alpha=\tau_{\mu}'(1)=-\sum_{s=1}^d\sum_{b=0}^{c-1} \pi^s_{b}\log_{c}\pi^s_{b},
\end{equation}
thus recovering a well-known result of Besicovitch~\cite{Besicovitch:1934fk} and Eggleston~\cite{Eggleston:1949kx}. The law of the iterated logarithm (see e.g.~\cite{Rogers:1994ug}) yields a more precise result. Specifically, for $m$-almost every $x\in [0,1]^d$ there is an integer $j(x)\in\N$ such that
$$
\sup_{z\in [0,1]^d\cap 3\lambda^c_{j}(x)} \max_{s\in\intn{1}{d} \atop b\in\intn{0}{c-1}} \left|\sigma_{b,j}(z^s)-\pi^s_{b}\right|\leq \sqrt{\log c}\, \varsigma(c^{-j})
$$
for all integer $j\geq j(x)$, where
\begin{equation}\label{correcvarsigma}
\varsigma:r\mapsto\left(\frac{\log\log\log \frac{1}{r}}{\log \frac{1}{r}}\right)^{1/2}.
\end{equation}
Let $I^\devel_{\pi}$ denote the set of all $i\in I$ such that
\begin{equation}\label{develpicond}
\max_{s\in\intn{1}{d} \atop b\in\intn{0}{c-1}} \left|\sigma_{b,\lfloor -\log_{c}r_{i} \rfloor}(x_{i}^s)-\pi^s_{b}\right|\leq 2\sqrt{\log c}\,\varsigma(r_{i}).
\end{equation}
The following result is established in Section~\ref{grintheterproofubiq}.

\begin{prp}\label{ubiquitymultinom}
Consider $d$ probability vectors $\pi^1,\ldots,\pi^d\in (0,1)^c$ ($c\geq 2$). Let $I$ be a denumerable set and let $(x_{i},r_{i})_{i\in I}\in\S^0_{d}(I)$. Assume that~(\ref{condcov}) holds for $m=\mu_{\pi^{1}}\otimes\cdots\otimes\mu_{\pi^d}$. Then, the set
$$
\tilde E^\devel_{t}=\left\{ x\in [0,1]^d \:\bigl|\: \|x-x_{i}\|<{r_{i}}^t\text{ for infinitely many }i\in I^\devel_{\pi} \right\}
$$
belongs to $\grint^{h_{\alpha/t,3f_{\eta}}}((0,1)^d)$ and $E^\devel_{t}=\Z^d+\tilde E^\devel_{t}$ belongs to $\grint^{h_{\alpha/t,3f_{\eta}}}(\R^d)$ for all $t\in [1,\infty)$ and $\eta\in (0,1/8)$, where $\alpha$ is given by~(\ref{dimeggleston}) and $f_{\eta}:r\mapsto (-\log r)^{\eta-1/8}$.
\end{prp}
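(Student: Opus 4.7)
The plan is to apply Theorem~\ref{UBIQUITYHETER} not to the full family $(x_i, r_i)_{i \in I}$ but to its subfamily indexed by $I^\devel_\pi$, and then to transfer the conclusion to $\tilde E^\devel_t$ and $E^\devel_t$ using closure of $\grint^h$ under $G_\delta$-supersets. The main work is to verify that $(x_i, r_i)_{i \in I^\devel_\pi}$ is itself a heterogeneous ubiquitous system with respect to $(\mu, \alpha, \alpha, f_\eta)$.

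I would first specialize the multinomial setup at $q = 1$: then $m = \mu_{\pi^1, 1} \otimes \cdots \otimes \mu_{\pi^d, 1}$ coincides with $\mu$, the exponent $\beta = q\tau_{\mu}'(q) - \tau_{\mu}(q)$ reduces to $\tau_{\mu}'(1) = \alpha$ of~(\ref{dimeggleston}), and by Theorem~1 in~\cite{Barral:2005uq} the conditions (\ref{condm})--(\ref{condselfsim}) hold with $\ph = f_\eta$ and some $\psi \in \Psi$. These three conditions depend only on $m$, so they remain valid after restricting the index set to $I^\devel_\pi$; only (\ref{condcov}) must be re-established. Fix $x$ outside a $\mu$-null set: by hypothesis infinitely many $i \in I$ satisfy $\|x - x_i\| \leq r_i/2$, and their radii tend to zero because $(x_i, r_i) \in \S^0_d(I)$. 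The uniform law of iterated logarithm recalled just before the statement yields $j(x)$ such that
\[ \max_{s, b} \left| \sigma_{b, j}(z^s) - \pi^s_b \right| \leq \sqrt{\log c}\, \varsigma(c^{-j}) \qquad (j \geq j(x),\ z \in 3\lambda^c_j(x) \cap [0,1]^d). \]
Setting $j_i = \lfloor -\log_c r_i \rfloor$, the inequality $r_i \leq c^{-j_i}$ combined with $\|x - x_i\| \leq r_i/2$ places $x_i$ inside $3\lambda^c_{j_i}(x)$, and $j_i \geq j(x)$ eventually. Applying the LIL to $z = x_i$ at level $j = j_i$ and observing that $\log(1/c^{-j_i}) = \log(1/r_i) + O(1)$ forces $\varsigma(c^{-j_i}) \leq 2\varsigma(r_i)$ for all $i$ past a threshold depending on $x$, I obtain $i \in I^\devel_\pi$ for all but finitely many such $i$. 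Hence (\ref{condcov}) indeed holds for the subfamily $(x_i, r_i)_{i \in I^\devel_\pi}$.

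At this point $(x_i, r_i)_{i \in I^\devel_\pi}$ is a heterogeneous ubiquitous system with respect to $(\mu, \alpha, \alpha, f_\eta)$. The second bullet of Theorem~\ref{UBIQUITYHETER} applied with $\phi = f_\eta$ supplies $M' \in [1, \infty)$ and $\psi' \in \Psi$ such that the limsup sets built on $(I^\devel_\pi)^{\mu, \alpha}_{M', \psi'}$ belong respectively to $\grint^{h_{\alpha/t, 3f_\eta}}((0,1)^d)$ and $\grint^{h_{\alpha/t, 3f_\eta}}(\R^d)$ for every $t \geq 1$. Since $(I^\devel_\pi)^{\mu, \alpha}_{M', \psi'} \subseteq I^\devel_\pi$, these sets are contained in $\tilde E^\devel_t$ and $E^\devel_t$ respectively. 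Both of the latter are $G_\delta$-sets, and the class $\grint^h(V)$ is stable under $G_\delta$-supersets because $F \mapsto \netm^g_\infty(F \cap U)$ is monotone in $F$; the desired inclusions follow. The hard part will be the digit transfer in the middle paragraph, where one must carefully handle the scale mismatch between $r_i$ and $c^{-j_i}$ so that the factor $2$ hardwired into (\ref{develpicond}) indeed absorbs the resulting discrepancy in the LIL bound.
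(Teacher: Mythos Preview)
Your argument is correct, but it differs from the paper's route. The paper does not re-verify condition~(\ref{condcov}) for the subfamily $(x_i,r_i)_{i\in I^\devel_\pi}$ and then invoke Theorem~\ref{UBIQUITYHETER} as a black box. Instead, it re-runs the proof of Theorem~\ref{UBIQUITYHETER} itself: the set $\Sigma$ in Section~\ref{ubiquitylemmas} is redefined via the LIL bound on digit frequencies, and Lemma~\ref{lemfXSfSigma} is modified so that, for $z\in (0,1)^d\cap f(X)\cap S\cap f(\Sigma)$, the infinite set $I_f(z)$ lands directly in $I^\devel_\pi$ rather than in $I^{\mu,\alpha,\weak}_{M,\psi}$; the Cantor construction then yields~(\ref{hauxicapdcstricpos}) with $E^\devel_t$ in place of $E_t^{\weak}$. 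The core digit-transfer computation (placing $x_i$ in $3\lambda^c_{j_i}(\dot x)$ and absorbing the scale discrepancy $\varsigma(c^{-j_i})\leq 2\varsigma(r_i)$) is identical in both approaches, but the paper carries it out inside the proof of the modified Lemma~\ref{lemfXSfSigma}, where $\dot x=f^{-1}(z)\modu\Z^d$ ranges over all of $(0,1)^d$ under the dilations $f\in D_c$, whereas you carry it out once on $[0,1]^d$ to secure~(\ref{condcov}) for the restricted family. Your approach is more modular and avoids reopening the Cantor construction, at the small cost of an extra layer: Theorem~\ref{UBIQUITYHETER} outputs a limsup set indexed by $(I^\devel_\pi)^{\mu,\alpha}_{M',\psi'}\subseteq I^\devel_\pi$, and you then pass to the $G_\delta$-superset $E^\devel_t$ via monotonicity of $\netm^g_\infty$. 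The paper's route yields $E^\devel_t\in\grint^{h_{\alpha/t,3f_\eta}}(\R^d)$ directly, without that intermediate subset.
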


Proposition~\ref{ubiquitymultinom} enables to determine the large intersection properties of the set
\begin{equation}\label{defupit}
U_{\pi,t}=\left\{ x\in\R \:\Biggl|\: \begin{array}{l}
\exists (p_{n}/q_{n})_{n\in\N}\text{ irreducible} \:|\: q_{n}\to\infty \\
\forall n \qquad |x-p_{n}/q_{n}|\leq {q_{n}}^{-2t} \\
\forall b \qquad \sigma_{b,\lfloor 2\log_{c}q_{n} \rfloor}(p_{n}/q_{n}) \to \pi_{b}
\end{array}\right\}
\end{equation}
of all real numbers that are $2t$-approximable ($t\geq 1$) by rationals which satisfy the Besicovitch condition associated with a given probability vector $\pi=(\pi_{0},\ldots,\pi_{c-1})\in (0,1)^c$. Indeed, let $I=\I_{1,{\rm rat}}^*$ (see Section~\ref{grintheterubiq}) and let $x_{(p,q)}=p/q$ and $r_{(p,q)}=1/q^{2}$ for every $(p,q)\in I$. Then~(\ref{condcov}) holds for $m=\mu_{\pi}$ since $\mu_{\pi}(\Q)=0$. In addition, $U_{\pi,t}\supseteq E^\devel_{t}$ because each injective sequence $(p_{n},q_{n})_{n\in\N}$ in $I^\devel_{\pi}$ enjoys $\sigma_{b,\lfloor 2\log_{c}q_{n} \rfloor}(p_{n}/q_{n})\to\pi_{b}$ for all $b\in\intn{0}{c-1}$. Thus Proposition~\ref{ubiquitymultinom} implies the following result.

\begin{cor}\label{grintbarralseuret}
For every $t\in [1,\infty)$ and every probability vector $\pi=(\pi_{0},\ldots,\pi_{c-1})\in (0,1)^c$ ($c\geq 2$), the set $U_{\pi,t}$ defined by~(\ref{defupit}) contains a set of the class $\grint^{h_{\alpha/t,3f_{\eta}}}(\R)$ where $\alpha=-\sum_{b=0}^{c-1} \pi_{b}\log_{c}\pi_{b}$ and $f_{\eta}(r)=(-\log r)^{\eta-1/8}$ for each $\eta\in (0,1/8)$.
\end{cor}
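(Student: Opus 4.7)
The strategy is already outlined in the paragraph preceding the statement: apply Proposition~\ref{ubiquitymultinom} to a well-chosen family and then verify that its conclusion produces a subset of $U_{\pi,t}$. I take $d=1$, $I=\I_{1,{\rm rat}}^*$, and $(x_{(p,q)},r_{(p,q)})=(p/q,q^{-2})$; with this choice, the set $E^\devel_t$ furnished by Proposition~\ref{ubiquitymultinom} will provide the required member of $\grint^{h_{\alpha/t,3f_\eta}}(\R)$.

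First, I would check the hypothesis~(\ref{condcov}) with $m=\mu_\pi$. As recalled in Section~\ref{grintheterubiq}, the family $(p/q,1/q^2)_{(p,q)\in\I_{1,{\rm rat}}^*}$ satisfies~(\ref{condcov}) against any $m\in\mesb$ with $m(\Q)=0$ (the argument rests on the Hurwitz bound $\gamma_1=1/\sqrt5<1/2$). Since every $\pi_b$ lies in $(0,1)$, no single $c$-adic expansion carries positive $\mu_\pi$-mass, so $\mu_\pi$ is atomless; countability of $\Q$ then gives $\mu_\pi(\Q)=0$. Proposition~\ref{ubiquitymultinom} applies and yields $E^\devel_t\in\grint^{h_{\alpha/t,3f_\eta}}(\R)$ for every $t\geq 1$ and every $\eta\in(0,1/8)$, with $\alpha$ and $f_\eta$ as stated in the corollary.

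Next, I would establish $E^\devel_t\subseteq U_{\pi,t}$. Given $x$ in the left-hand side, there are infinitely many triples $((p,q),k)\in I^\devel_\pi\times\Z$ with $|x-k-p/q|<q^{-2t}$. Setting $P=kq+p$ and $Q=q$, the identity $\gcd(kq+p,q)=\gcd(p,q)=1$ makes $P/Q$ irreducible, while $|x-P/Q|\leq Q^{-2t}$. The condition $r_i\to 0$ inherent in $\S^0_1(I)$, together with the fact that $k$ is determined up to $O(1)$ values once $(p,q)$ is fixed, forces $Q\to\infty$ along any enumeration of these witnesses. It remains to transfer the Besicovitch condition: because $\sigma_{b,j}$ depends only on the fractional-part digits (adding an integer to $x$ only shifts the coefficient $x_0$ of its base-$c$ decomposition), one has
$$
\sigma_{b,\lfloor 2\log_c Q\rfloor}(P/Q)=\sigma_{b,\lfloor 2\log_c q\rfloor}(p/q),
$$
and by membership of $(p,q)$ in $I^\devel_\pi$ the right-hand side is within $2\sqrt{\log c}\,\varsigma(q^{-2})$ of $\pi_b$. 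The correction~(\ref{correcvarsigma}) vanishes as $q\to\infty$, so the Besicovitch condition in~(\ref{defupit}) is satisfied and $x\in U_{\pi,t}$.

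I do not expect any substantial obstacle: all the analytic content is packaged in Proposition~\ref{ubiquitymultinom}. The residual work consists of the atomlessness remark that secures~(\ref{condcov}) and the one-line observation that $\sigma_{b,j}$ is invariant under integer translation, which bridges the $I^\devel_\pi$-witnesses and the Besicovitch condition in the definition of $U_{\pi,t}$.
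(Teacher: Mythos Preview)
Your proposal is correct and follows essentially the same route as the paper: choose $I=\I_{1,{\rm rat}}^*$ with $(x_{(p,q)},r_{(p,q)})=(p/q,q^{-2})$, invoke $\mu_\pi(\Q)=0$ to secure~(\ref{condcov}), apply Proposition~\ref{ubiquitymultinom}, and check $E^\devel_t\subseteq U_{\pi,t}$. The paper states the inclusion in one line by noting that any injective sequence in $I^\devel_\pi$ has $\sigma_{b,\lfloor 2\log_c q_n\rfloor}(p_n/q_n)\to\pi_b$; you simply make explicit the integer-translation invariance of $\sigma_{b,j}$ needed to pass from $\tilde E^\devel_t$ to $E^\devel_t=\Z+\tilde E^\devel_t$, which the paper leaves implicit.
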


Examining the proof of Proposition~\ref{ubiquitymultinom}, one easily checks that the statement of Corollary~\ref{grintbarralseuret} remains valid if the Besicovitch condition bears on the approximated reals rather than on the rational approximates, that is, if $U_{\pi,t}$ is replaced by
$$
U_{\pi,t}'=\left\{ x\in\R \:\Biggl|\: \begin{array}{l}
\exists (p_{n}/q_{n})_{n\in\N}\text{ irreducible} \:|\: q_{n}\to\infty \\
\forall n \qquad |x-p_{n}/q_{n}|\leq {q_{n}}^{-2t} \\
\forall b \qquad \sigma_{b,\lfloor 2\log_{c}q_{n} \rfloor}(x) \to \pi_{b}
\end{array}\right\}.
$$
In view of the fact that $\grint^{h_{\alpha/t,3f_{\eta}}}(\R)\subset\grintfalc^{\alpha/t}(\R)$ for every $\eta\in (0,1/8)$, we thus obtain a refinement of Theorem~1.2 in~\cite{Barral:2006fk}.

Let us now investigate the large intersection properties of the set of all reals that are $2t$-approximable by irreducible rationals $p_{n}/q_{n}$, $n\in\N$, such that the frequencies
$$
\bar\sigma_{b}((p_{n},q_{n})_{n\in\N})=\lim_{n\to\infty}\sigma_{b,\lfloor 2\log_{c}q_{n} \rfloor}\left(\frac{p_{n}}{q_{n}}\right),
$$
for $b\in\intn{0}{c-1}$, satisfy some relations similar to those introduced in~\cite{Barreira:2002fk}. To fix ideas, let $c=3$ and let $V_{t}$ be the set of all reals $x$ such that $|x-p_{n}/q_{n}|\leq {q_{n}}^{-2t}$ for some sequence $(p_{n}/q_{n})_{n\in\N}$ of irreducible fractions with
$$
\bar\sigma_{0}((p_{n},q_{n})_{n\in\N})>0 \qquad\text{and}\qquad \bar\sigma_{1}((p_{n},q_{n})_{n\in\N})=2\,\bar\sigma_{2}((p_{n},q_{n})_{n\in\N})>0.
$$
It is straightforward to establish that $V_{t}\supseteq U_{(1-3\sigma,2\sigma,\sigma),t}$ for all $\sigma\in (0,1/3)$. Let $\alpha(\sigma)=-(1-3\sigma)\log_{3}(1-3\sigma)-2\sigma\log_{3}(2\sigma)-\sigma\log_{3}\sigma$. Corollary~\ref{grintbarralseuret} ensures that $V_{t}$ contains a set of the class $\grint^{h_{\alpha(\sigma)/t,3f_{\eta}}}(\R)$ for all $\eta\in (0,1/8)$. In particular, it contains a set of $\grint^{h_{\alpha(\sigma_{\star})/t,3f_{\eta}}}(\R)$ where $\sigma_{\star}$ is chosen to maximize $\alpha$ (to be specific, $\sigma_{\star}=(2^{4/3}+9-3\cdot 2^{2/3})/31$).

\subsection{Gibbs measures}

We begin by giving a brief account of Gibbs measures. Given an integer $c\geq 2$, let $\sigma(x)=cx\modu\Z^d$ for each $x\in [0,1)^d$. Let $f$ be a $\Z^d$-periodic H\"older continuous function. The Birkhoff sums associated with $f$ are defined by $S_{j}^f=\sum_{k=0}^{j-1} f\circ \sigma^k$ for all $j\in\N$. In addition, let $D_{j}^f=\exp\circ S_{j}^f$ and
$$
\mu^f_{j}({\rm d}x)=\frac{\ind_{[0,1)^d}(x) D_{j}^f(x) {\rm d}x}{\int_{[0,1)^d} D_{j}^f(y) {\rm d}y}.
$$
The iterates of the Ruelle operator can be computed explicitly in terms of $(\mu^f_{j})_{j\in\N}$ and the Ruelle-Perron-Frobenius theorem shows that this sequence converges weakly to a Borel probability measure $\mu_{f}$ with support $[0,1]^d$, see~\cite{Parry:1990fk}. Moreover, $\mu_{f}$ is ergodic and is a Gibbs measure, i.e. there is a positive real $C$ such that
\begin{equation}\label{gibbsencadr}
\frac{1}{C}\leq \frac{\mu_{f}(\lambda^c_{j}(x))}{e^{-j P_{f}(1)} D_{j}^f(x)} \leq C
\end{equation}
for all $j\in\N$ and all $x\in [0,1)^d$, where $P_{f}$ denotes the pressure function associated with $f$, that is,
$$
P_{f}: q\mapsto d\log c+\lim_{j\to\infty}\frac{1}{j}\log\int_{[0,1)^d} D_{j}^{qf}(x) {\rm d}x.
$$
The multifractal analysis of $\mu_{f}$ is given in~\cite{Fan:1997uq,Fan:1998kx} and involves a function $\tau_{\mu_{f}}$ which can be expressed in terms of $P_{f}$ thanks to~(\ref{gibbsencadr}). Specifically,
$$
\forall q\in\R \qquad \tau_{\mu_{f}}(q)=-\lim_{j\to\infty} \frac{1}{j}\log_{c}\sum_{\lambda\subseteq [0,1) \atop \gene{\lambda}_{c}=j} \left( \mu_{f}(\lambda) \right)^q=\frac{1}{\log c}\left( q P_{f}(1) - P_{f}(q) \right)
$$
where the sum is over all $c$-adic subcubes of $[0,1)^d$ with generation $j$. Given $q\in\R$, let $\alpha=\tau_{\mu_{f}}'(q)$ and $\beta=q\tau_{\mu_{f}}'(q)-\tau_{\mu_{f}}(q)$. Then $\beta$ is the Hausdorff dimension of the set $V_{\alpha}$ of all points $x\in [0,1)^d$ such that $-\frac{1}{j}\log_{c}\mu_{f}(\lambda^c_{j}(x)) \to \alpha$.

Let us investigate the large intersection properties of the corresponding sets $\tilde E_{t}$ and $E_{t}$ which are defined in Section~\ref{grintheterubiq}. Let $I$ be a denumerable set and let $(x_{i},r_{i})_{i\in I}\in\S^0_{d}(I)$. Assume that for each $x\in [0,1]^d$ there are infinitely many $i\in I$ such that $\|x-x_{i}\|\leq r_{i}/2$. As a result,~(\ref{condcov}) holds for all $m\in\mesb$. Let us show that~(\ref{condm})-(\ref{condselfsim}) hold for $m=\mu_{qf}$ and $\mu=\mu_{f}$. The law of the iterated logarithm implies that for some $\kappa'>0$ and $m$-almost every $x\in [0,1]^d$, there is an integer $j(x)$ such that, for every integer $j\geq j(x)$,
$$
\left| \frac{1}{j} S^{qf}_{j}(x) - q P'_{f}(q) \right| \leq \kappa'\varsigma(c^{-j})
$$
where $\varsigma$ is given by~(\ref{correcvarsigma}), see~\cite{Denker:1984fk}. Then, as $f$ is H\"older continuous, for some $\kappa\geq\kappa'$ and $m$-almost every $x\in [0,1]^d$, there is an integer $j(x)$ such that
$$
\sup_{z\in 3\lambda^c_{j}(x)} \left| \frac{1}{j} S^{qf}_{j}(z) - q P'_{f}(q) \right| \leq \kappa\varsigma(c^{-j})
$$
for every integer $j\geq j(x)$. Using~(\ref{gibbsencadr}) with the measure $m$, we obtain~(\ref{condm}) for $\ph=2\kappa\varsigma/\log c$. Likewise, making use of~(\ref{gibbsencadr}) with the measure $\mu$, we get~(\ref{condmu}) for $\psi=2\kappa\varsigma/(|q|\log c)$ provided that $q\neq 0$. Furthermore, some routine calculations show that~(\ref{condselfsim}) holds. Hence, $(x_{i},r_{i})_{i\in I}$ is a heterogeneous ubiquitous system with respect to $(\mu,\alpha,\beta,\ph)$ if $q\neq 0$. Theorem~\ref{UBIQUITYHETER} thus ensures that $\tilde E_{t}\in\grint^{h_{\frac{\beta}{t},\frac{6\kappa}{\log c}\varsigma}}((0,1)^d)$ and $E_{t}\in\grint^{h_{\frac{\beta}{t},\frac{6\kappa}{\log c}\varsigma}}(\R^d)$ for all $t\in [1,\infty)$ and some $M\in [1,\infty)$.

The previous results can be expressed in terms of the Birkhoff sums $S^f_{j}$. Specifically, let $I^\birkhoff_{f,q}$ denote the set of all $i\in I$ such that
$$
\left| \frac{1}{\lfloor -\log_{c}r_{i} \rfloor} S^{f}_{\lfloor -\log_{c}r_{i} \rfloor}(x_{i}) - P'_{f}(q) \right| \leq 2\frac{\kappa}{|q|}\varsigma(r_{i}).
$$
Note that $I^\birkhoff_{f,0}=I$. The following proposition is established in Section~\ref{grintheterproofubiq}.

\begin{prp}\label{ubiquitygibbs}
Let $f$ be a $\Z^d$-periodic H\"older continuous function, let $I$ be a denumerable set and let $(x_{i},r_{i})_{i\in I}\in\S^0_{d}(I)$. Assume that each $x\in [0,1]^d$ enjoys $\|x-x_{i}\|\leq r_{i}/2$ for infinitely many $i\in I$. Then, for all $q\in\R$ and all $t\in [1,\infty)$,
$$
\tilde E^\birkhoff_{t}=\left\{ x\in [0,1]^d \:\bigl|\: \|x-x_{i}\|<{r_{i}}^t\text{ for infinitely many }i\in I^\birkhoff_{f,q} \right\}
$$
belongs to $\grint^{h_{\frac{\beta}{t},\frac{6\kappa}{\log c}\varsigma}}((0,1)^d)$ and $E^\birkhoff_{t}=\Z^d+\tilde E^\birkhoff_{t}$ belongs to $\grint^{h_{\frac{\beta}{t},\frac{6\kappa}{\log c}\varsigma}}(\R^d)$, where $\beta=q\tau_{\mu_{f}}'(q)-\tau_{\mu_{f}}(q)$ and $\varsigma$ is given by~(\ref{correcvarsigma}).
\end{prp}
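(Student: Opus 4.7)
The strategy is to deduce the proposition from Theorem~\ref{UBIQUITYHETER} by treating $q=0$ and $q\ne 0$ separately, and in the latter case by comparing the Birkhoff index set $I^\birkhoff_{f,q}$ with the ball-mass index set $I^{\mu_f,\alpha}_{M,\psi}$ which appears in Theorem~\ref{UBIQUITYHETER}.

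When $q=0$, the definition yields $I^\birkhoff_{f,0}=I$, so $\tilde E^\birkhoff_t=\tilde E_t$ and $E^\birkhoff_t=E_t$ are the sets built from the whole family. A short computation from the pressure gives $\beta=-\tau_{\mu_f}(0)=d$. The covering hypothesis ensures that $(x_i,r_i)_{i\in I}$ is a homogeneous ubiquitous system in $(0,1)^d$, hence a heterogeneous ubiquitous system with respect to $(\leb^d,d,d,\varphi)$ for every $\varphi\in\Phi$, and moreover $I^{\leb^d,d}_{M,\psi}=I$ for all $M\in[1,\infty)$ and $\psi\in\Psi$. Applying Theorem~\ref{UBIQUITYHETER} with $\varphi=\phi=2\kappa\varsigma/\log c$ then yields both conclusions.

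Assume now $q\ne 0$. The discussion preceding the proposition already establishes that $(x_i,r_i)_{i\in I}$ is a heterogeneous ubiquitous system with respect to $(\mu_f,\alpha,\beta,\varphi)$ for $\varphi=2\kappa\varsigma/\log c$. Theorem~\ref{UBIQUITYHETER}, applied with $\phi=2\kappa\varsigma/\log c$, produces some $M\in[1,\infty)$ and some $\psi^\ast\in\Psi$ such that the sets $\tilde E_t$ and $E_t$ built from $I^{\mu_f,\alpha}_{M,\psi^\ast}$ belong to $\grint^{h_{\beta/t,6\kappa\varsigma/\log c}}((0,1)^d)$ and $\grint^{h_{\beta/t,6\kappa\varsigma/\log c}}(\R^d)$ respectively. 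The remaining step is to verify that, for $\kappa$ large enough and up to finitely many indices, $I^{\mu_f,\alpha}_{M,\psi^\ast}\subseteq I^\birkhoff_{f,q}$, which gives $\tilde E_t\subseteq\tilde E^\birkhoff_t$; since $\tilde E^\birkhoff_t$ is a $G_\delta$-set and any $G_\delta$-superset of a set in $\grint^h(V)$ lies in $\grint^h(V)$, the conclusion follows. The same argument handles $E^\birkhoff_t$.

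The comparison between the two index sets proceeds as follows. For $j=\lfloor-\log_c r_i\rfloor$, the Gibbs bound~(\ref{gibbsencadr}) gives $\mu_f(\lambda^c_j(x_i))\asymp e^{-jP_f(1)+S^f_j(x_i)}$, and the H\"older continuity of $f$ extends this estimate, up to a bounded multiplicative constant, to $\mu_f(B(x_i,r_i))$ via a comparison with the finitely many $c$-adic cubes of generation $j$ meeting the ball. Taking logarithms of the two-sided bound defining $I^{\mu_f,\alpha}_{M,\psi^\ast}$, dividing by $j$, and using $\alpha=(P_f(1)-P'_f(q))/\log c$ together with $j^{-1}\log r_i\to-\log c$, one obtains $|S^f_j(x_i)/j-P'_f(q)|\le C(\psi^\ast(r_i)+1/j)$ for an absolute constant $C$ depending only on $f$ and the Gibbs constant. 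Since $\psi^\ast$ arises as a near-multiple of $2\kappa\varsigma/(|q|\log c)$ and $1/j$ is negligible before $\varsigma(r_i)$, the right-hand side is dominated by $2\kappa\varsigma(r_i)/|q|$ for $\kappa$ large enough and $r_i$ small enough. The main obstacle is the bookkeeping of the multiplicative constants from the Gibbs inequality and the H\"older regularity of $f$, which must be absorbed into the choice of $\kappa$, together with the verification that the function $\psi^\ast$ produced by Theorem~\ref{UBIQUITYHETER} really is controlled by $\varsigma/|q|$ up to the tolerance permitted by $\Psi$.
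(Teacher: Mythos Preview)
Your approach differs from the paper's. The paper does \emph{not} apply Theorem~\ref{UBIQUITYHETER} as a black box and then compare index sets; instead it reopens the proof of Theorem~\ref{UBIQUITYHETER} and replaces the target set $I^{\mu,\alpha,\weak}_{M,\psi}$ by $I^\birkhoff_{f,q}$ directly inside Lemma~\ref{lemfXSfSigma}. Concretely, the set $\Sigma$ is redefined so that (\ref{supencgibbs}) holds, and for $z\in f(\Sigma)$ one uses that $p+x_i\in 3\lambda^c_{j_1}(f^{-1}(z))$ together with the periodicity of $f$ to obtain
\[
\Bigl|\tfrac{1}{j_1}S^f_{j_1}(x_i)-P'_f(q)\Bigr|\le \tfrac{\kappa}{|q|}\varsigma(c^{-j_1})\le \tfrac{2\kappa}{|q|}\varsigma(r_i),
\]
so $i\in I^\birkhoff_{f,q}$. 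The remainder of the Cantor-set construction then runs verbatim with $I^\birkhoff_{f,q}$ in place of $I^{\mu,\alpha,\weak}_{M,\psi}$, and the factor $2$ in the definition of $I^\birkhoff_{f,q}$ is precisely the bound $\varsigma(c^{-j_1})\le 2\varsigma(r_i)$.

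Your route---deduce the Birkhoff bound at $x_i$ from the ball-mass condition $i\in I^{\mu_f,\alpha}_{M,\psi^\ast}$ via the Gibbs estimate---is conceptually reasonable but has a genuine gap in the constants. With $\psi^\ast=2\kappa\varsigma/(|q|\log c)$ (the $\psi$ actually produced by the proof of Theorem~\ref{UBIQUITYHETER}), your computation yields
\[
\Bigl|\tfrac{1}{j}S^f_j(x_i)-P'_f(q)\Bigr|\le \tfrac{2\kappa}{|q|}\varsigma(r_i)\,(1+o(1)),
\]
which strictly exceeds $\tfrac{2\kappa}{|q|}\varsigma(r_i)$ for small $r_i$, so the inclusion $I^{\mu_f,\alpha}_{M,\psi^\ast}\subseteq I^\birkhoff_{f,q}$ fails as stated. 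Your proposed remedy, ``for $\kappa$ large enough'', does not help: both the bound you derive and the threshold defining $I^\birkhoff_{f,q}$ scale linearly in $\kappa$, so the mismatch persists. A correct fix would be to observe that condition~(\ref{condmu}) in fact holds with the smaller $\psi_0=(1+\varepsilon)\kappa\varsigma/(|q|\log c)$ for any $\varepsilon>0$ (the factor $2$ in the paper's $\psi$ is only there to absorb lower-order terms), run Theorem~\ref{UBIQUITYHETER} with this $\psi_0$, and then your derived bound $((1+\varepsilon)\kappa/|q|)(1+o(1))\varsigma(r_i)$ is indeed below $2\kappa\varsigma(r_i)/|q|$ for $r_i$ small. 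You have not articulated this, and your remark that ``$\psi^\ast$ arises as a near-multiple of $2\kappa\varsigma/(|q|\log c)$'' is too vague to carry the argument. The paper's direct approach sidesteps the issue entirely because it uses the LIL constant $\kappa$ (not $2\kappa$) at the point where the Birkhoff bound is extracted.
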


A typical application of Proposition~\ref{ubiquitygibbs} is the fact that the set of all real numbers that are $2t$-approximable by rationals $p_{n}/q_{n}$, $n\in\N$, such that
$$
\lim_{n\to\infty} \frac{1}{\lfloor 2\log_{c}q_{n} \rfloor} S^{f}_{\lfloor 2\log_{c}q_{n} \rfloor}\left(\frac{p_{n}}{q_{n}}\right) = P'_{f}(q)
$$
contains a set of the class $\grint^{h_{\frac{\beta}{t},\frac{6\kappa}{\log c}\varsigma}}(\R)$.

\section{Proof of Proposition~\ref{HAUSIMDCGRINT}}\label{grintheterproofsets}

We use some ideas of the proof of the implication~(b)$\Rightarrow$(c) of Theorem~B in~\cite{Falconer:1994hx}, but the situation is a little more complex here because the gauge functions we consider are more general than those of~\cite{Falconer:1994hx}. Let $\beta\in (0,d]$ and $\ph\in\Phi$. It is straightforward to check that for some $\eta\in (0,\eps_{h_{\beta,\ph}}]$,
\begin{equation}\label{condlimsupbetar1r2}
\forall r_{1},r_{2}\in (0,\eta) \qquad r_{1}\leq r_{2} \quad\Longrightarrow\quad \limsup_{\kappa\to 0}\frac{h_{\beta,\ph}(\kappa r_{1})}{h_{\beta,\ph}(\kappa r_{2})}\leq \frac{h_{\beta,\ph}(r_{1})}{h_{\beta,\ph}(r_{2})}.
\end{equation}
We now let $F$ be a $G_{\delta}$-set such that~(\ref{hauhcapfnfpos}) holds for every sequence $(f_{n})_{n\in\N}$ of similarities in $D_{c}$ and show that $F\in\grint^{h_{\beta,\ph}}(\R^d)$. By Lemma~10 in~\cite{Durand:2007uq}, it suffices to prove that $\netm^{h_{\beta,\ph}}_{\infty}(F\cap\lambda)=\netm^{h_{\beta,\ph}}_{\infty}(\lambda)$ for each $c$-adic cube $\lambda$ of diameter less than $\eta$. Let us assume that there is a $c$-adic cube $\lambda$ of diameter less than $\eta$ and a real number $\alpha\in (0,1)$ with $\netm^{h_{\beta,\ph}}_{\infty}(F\cap\lambda) < \alpha \netm^{h_{\beta,\ph}}_{\infty}(\lambda)$. For $G\subseteq\R^d$, let $R^\lambda_{c}(G)$ be the set of all sequences $(\lambda_{p})_{p\in\N}$ in $\Lambda_{c}\cup\{\emptyset\}$ such that $G\cap\lambda\subseteq\bigsqcup_{p}\lambda_{p}\subseteq\lambda$ (i.e. the sets $\lambda_{p}$, $p\in\N$, are disjoint, included in $\lambda$ and cover $G$). Thanks to Lemma~8 and Lemma~9 in~\cite{Durand:2007uq}, for some $(\lambda_{p})_{p\in\N}\in R^\lambda_{c}(F)$, we have
\begin{equation}\label{majsumglambdai}
\sum_{p=1}^\infty h_{\beta,\ph}(\diam{\lambda_{p}}) \leq \alpha h_{\beta,\ph}(\diam{\lambda})
\end{equation}
and $\lambda_{p}\subseteq\lambda$ for all $p$. Let $P$ be the set of all $p\in\N$ such that $\lambda_{p}\neq\emptyset$ and, for each $p\in P$, let $f_{p}$ be the central dilation that maps $\lambda$ to $\lambda_{p}$. Furthermore, let $f_{p_{1},\ldots,p_{s}}=f_{p_{1}}\circ\ldots\circ f_{p_{s}}$ for $s\in\N$ and $(p_{1},\ldots,p_{s})\in P^s$. This is a central dilation which maps $\lambda$ to one of its proper $c$-adic subcubes. In addition, let $f_{p_{1},\ldots,p_{s}}$ denote the identity function if $s=0$.

Let $\gamma$ be a positive real number such that $(1+\gamma)\alpha<1$. Owing to~(\ref{condlimsupbetar1r2}), for some $\bar\kappa\in (0,\eps_{h_{\beta,\ph}})$ and every $\kappa\in (0,\bar\kappa)$, we have
\begin{equation}\label{condlimsupbetar1r2bis}
\forall r_{1},r_{2}\in (0,\eta) \qquad r_{1}\leq r_{2} \quad\Longrightarrow\quad \frac{h_{\beta,\ph}(\kappa r_{1})}{h_{\beta,\ph}(\kappa r_{2})}\leq (1+\gamma)\frac{h_{\beta,\ph}(r_{1})}{h_{\beta,\ph}(r_{2})}.
\end{equation}
By Theorem~4 in~\cite{Rogers:1970wb}, we build a finite outer measure on $\lambda$ by setting
$$
\forall G\subseteq\lambda \qquad m_{\kappa}(G) = \inf_{(\nu_{p})_{p\in\N}} \sum_{p=1}^\infty h_{\beta,\ph}(\diam{\nu_{p}})
$$
where the infimum is taken over all sequences $(\nu_{p})_{p\in\N}$ in $\Lambda_{c}\cup\{\emptyset\}$ such that $G\subseteq\bigcup_{p}\nu_{p}\subseteq\lambda$ and $\diam{\nu_{p}}<\kappa\diam{\lambda}$ for all $p$. Let $(\nu_{p})_{p\in\N}$ be such a sequence for $G=f_{p_{2},\ldots,p_{s}}(F\cap\lambda)$. Then the sets $f_{p_{1}}(\nu_{p})\subseteq\lambda$, $p\in\N$, belong to $\Lambda_{c}\cup\{\emptyset\}$, are of diameter $\diam{\nu_{p}}\cdot\diam{\lambda_{p_{1}}}/\diam{\lambda}<\kappa\diam{\lambda}$ and cover $f_{p_{1},\ldots,p_{s}}(F\cap\lambda)$. Thus
$$
m_{\kappa}\left( f_{p_{1},\ldots,p_{s}}(F\cap\lambda) \right) \leq \sum_{p=1}^\infty h_{\beta,\ph}\left( \frac{\diam{\nu_{p}}}{\diam{\lambda}}\diam{\lambda_{p_{1}}} \right) \leq (1+\gamma)\sum_{p=1}^\infty \frac{h_{\beta,\ph}(\diam{\lambda_{p_{1}}})}{h_{\beta,\ph}(\diam{\lambda})} h_{\beta,\ph}(\diam{\nu_{p}})
$$
owing to~(\ref{condlimsupbetar1r2bis}) together with $\diam{\nu_{p}}/\diam{\lambda}<\kappa<\bar\kappa$ and $\diam{\lambda_{p_{1}}}<\diam{\lambda}<\eta$. Taking the infimum over all sequences $(\nu_{p})_{p\in\N}$ in the right-hand side, we get
$$
m_{\kappa}(f_{p_{1},\ldots,p_{s}}(F\cap\lambda))\leq (1+\gamma)\frac{h_{\beta,\ph}(\diam{\lambda_{p_{1}}})}{h_{\beta,\ph}(\diam{\lambda})} m_{\kappa}(f_{p_{2},\ldots,p_{s}}(F\cap\lambda)).
$$
The procedure is iterated so as to obtain
$$
m_{\kappa}\left( f_{p_{1},\ldots,p_{s}}(F\cap\lambda) \right) \leq m_{\kappa}(F\cap\lambda)\left(\frac{1+\gamma}{h_{\beta,\ph}(\diam{\lambda})}\right)^s\prod_{s'=1}^s h_{\beta,\ph}(\diam{\lambda_{p_{s'}}}).
$$
Hence, summing over all $s$-tuples $(p_{1},\ldots,p_{s})\in P^s$ and using~(\ref{majsumglambdai}), we have
\begin{equation*}\begin{split}
\sum_{(p_{1},\ldots,p_{s})\in P^s} m_{\kappa}\left( f_{p_{1},\ldots,p_{s}}(F\cap\lambda) \right) &\leq \left(\frac{1+\gamma}{h_{\beta,\ph}(\diam{\lambda})}\sum_{p=1}^\infty h_{\beta,\ph}(\diam{\lambda_{p}})\right)^s m_{\kappa}(F\cap\lambda) \\
&\leq \left((1+\gamma)\alpha\right)^s m_{\kappa}(\lambda).
\end{split}\end{equation*}
In addition, let $x\in\lambda\cap f_{p_{1},\ldots,p_{q}}(F)$ for every integer $q\geq 0$ and every $q$-tuple $(p_{1},\ldots,p_{q})\in P^q$. It is straightforward to show by induction on $s\geq 0$ that $x\in f_{p_{1},\ldots,p_{s}}(F\cap\lambda)$ for some $s$-tuple $(p_{1},\ldots,p_{s})\in P^s$. As a result,
$$
\lambda\cap\bigcap_{q=0}^\infty\bigcap_{(p_{1},\ldots,p_{q})\in P^q} f_{p_{1},\ldots,p_{q}}(F) \subseteq \bigcup_{(p_{1},\ldots,p_{s})\in P^s} f_{p_{1},\ldots,p_{s}}(F\cap\lambda)
$$
so that
$$
m_{\kappa}\left( \lambda\cap\bigcap_{q=0}^\infty\bigcap_{(p_{1},\ldots,p_{q})\in P^q} f_{p_{1},\ldots,p_{q}}(F) \right) \leq \left((1+\gamma)\alpha\right)^s m_{\kappa}(\lambda).
$$
Let $\hau^{h_{\beta,\ph}}_{\kappa\diam{\lambda}}$ denote the Hausdorff pre-measure associated with $h_{\beta,\ph}$ and defined in terms of coverings by sets of diameter less than $\kappa\diam{\lambda}$. Note that $m_{\kappa}(G)\geq \hau^{h_{\beta,\ph}}_{\kappa\diam{\lambda}}(G)$ for every $G\subseteq\lambda$. Thus, letting $s\to\infty$ and $\kappa\to 0$, we obtain
$$
\hau^{h_{\beta,\ph}}\left( \lambda\cap\bigcap_{q=0}^\infty\bigcap_{(p_{1},\ldots,p_{q})\in P^q} f_{p_{1},\ldots,p_{q}}(F) \right) = 0.
$$
Let $(\nu_{n})_{n\in\N}$ be an enumeration of all $c$-adic cubes of generation $\gene{\lambda}_{c}$ and, for every $n\in\N$, let $t_{n}$ denote the translation that maps $\lambda$ to $\nu_{n}$. For each $n_{0}\in\N$, we have
$$
\nu_{n_{0}}\cap\bigcap_{n=1}^\infty\bigcap_{q=0}^\infty\bigcap_{(p_{1},\ldots,p_{q})\in P^q} t_{n}\circ f_{p_{1},\ldots,p_{q}}(F) \subseteq t_{n_{0}}\left(\lambda\cap\bigcap_{q=0}^\infty\bigcap_{(p_{1},\ldots,p_{q})\in P^q} f_{p_{1},\ldots,p_{q}}(F) \right)
$$
so that the left-hand side has zero Hausdorff $h_{\beta,\ph}$-measure. Since the cubes $\nu_{n_{0}}$, $n_{0}\in\N$, form a partition of $\R^d$, we end up with
$$
\hau^{h_{\beta,\ph}}\left( \bigcap_{n=1}^\infty\bigcap_{q=0}^\infty\bigcap_{(p_{1},\ldots,p_{q})\in P^q} t_{n}\circ f_{p_{1},\ldots,p_{q}}(F) \right) = 0
$$
which contradicts~(\ref{hauhcapfnfpos}) because $t_{n}\circ f_{p_{1},\ldots,p_{q}}\in D_{c}$ for every $(p_{1},\ldots,p_{q})\in P^q$, every integer $q\geq 0$ and every $n\in\N$. Proposition~\ref{HAUSIMDCGRINT} is proven.

\section{Proof of Theorem~\ref{UBIQUITYHETER}}\label{grintheterproofubiq}

In this section, $\R^d$ is endowed with the supremum norm. Let $I$ be a denumerable set and let $(x_{i},r_{i})_{i\in I}\in\S^0_{d}(I)$ be a heterogeneous ubiquitous system with respect to some $(\mu,\alpha,\beta,\ph)\in\mesb\times (0,\infty)\times (0,d]\times\Phi$. Thus there are an integer $c\geq 2$, a function $\psi\in\Psi$ and a measure $m\in\mesb$ such that~(\ref{condcov})-(\ref{condselfsim}) hold, see Section~\ref{grintheterubiq}.

\subsection{Preliminaries}\label{ubiquitylemmas}

We begin by introducing some extra notations and establishing a series of technical lemmas which are called upon at various points in the proof of Theorem~\ref{UBIQUITYHETER}. For every Borel subset $B$ of $\R^d$, let
$$
\bar m(B) = \sum_{p\in\Z^d} m\left( (p+B) \cap [0,1)^d \right).
$$
Then $\bar m$ is a $\sigma$-finite Borel measure on $\R^d$. Owing to~(\ref{condcov}), the Borel set
\begin{equation*}\begin{split}
X &= \Z^d+\left\{x\in [0,1]^d \:\bigl|\: \|x-x_{i}\|\leq r_{i}/2\text{ for infinitely many }i\in I \right\} \\
&= \left\{ x\in\R^d \:\bigl|\: \|x-p-x_{i}\|\leq r_{i}/2\text{ for infinitely many }(i,p)\in I\times\Z^d \right\}
\end{split}\end{equation*}
has full $\bar m$-measure in $\R^d$. Moreover,~(\ref{condm}) implies that for some Borel set $S\subseteq [0,1)^d$ of full $m$-measure in $[0,1)^d$ and every $x\in S$, there is an integer $j(x)\in\N$ such that for every integer $j\geq j(x)$ and every $k\in\intn{0}{c^j-1}^d$,
$$
\lambda^c_{j,k} \subseteq 3\lambda^c_{j}(x) \quad\Longrightarrow\quad m(\lambda^c_{j,k})\leq h_{\beta,\ph}(\diam{\lambda^c_{j,k}}).
$$
Likewise,~(\ref{condmu}) shows that for some Borel set $\Sigma$ of full $\bar m$-measure in $\R^d$ and every $x\in\Sigma$, there exists an integer $j(x)\in\N$ such that for every integer $j\geq j(x)$ and every $k\in\intn{0}{c^j-1}^d$,
$$
\lambda^c_{j,k} \subseteq 3\lambda^c_{j}(x\modu\Z^d) \quad\Longrightarrow\quad \diam{\lambda^c_{j,k}}^{\alpha+\psi(\diam{\lambda^c_{j,k}})} \leq \mu(\lambda^c_{j,k}) \leq \diam{\lambda^c_{j,k}}^{\alpha-\psi(\diam{\lambda^c_{j,k}})}
$$
where $x\modu\Z^d$ denotes the unique point in $[0,1)^d$ whose coordinates are congruent to those of $x$ modulo $1$.

\begin{lem}\label{fXSfSigmamespleine}
For every $f\in D_{c}$, the set $f(X)\cap S\cap f(\Sigma)$ has full $m$-measure in $[0,1)^d$.
\end{lem}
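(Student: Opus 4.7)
The plan is to split the desired complement as
$$
[0,1)^d \setminus (f(X) \cap S \cap f(\Sigma)) \subseteq ([0,1)^d \setminus f(X)) \cup ([0,1)^d \setminus S) \cup ([0,1)^d \setminus f(\Sigma)),
$$
use that $S$ has full $m$-measure in $[0,1)^d$ by construction to dispose of the middle term, and then prove $m([0,1)^d \setminus f(Y)) = 0$ for $Y \in \{X, \Sigma\}$.

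The first observation is that both $X$ and $\Sigma$ are $\Z^d$-invariant: $X$ by its defining expression $\Z^d + (\cdots)$, and $\Sigma$ because the condition~(\ref{condmu}) defining it only depends on $x \modu \Z^d$. Applying the identity $\bar m(B) = \sum_{p \in \Z^d} m((p+B) \cap [0,1)^d)$ to $B = \R^d \setminus Y$, the assumption $\bar m(\R^d \setminus Y) = 0$ together with the $\Z^d$-invariance of $Y$ then forces $m([0,1)^d \setminus Y) = 0$ (otherwise the sum would be infinite).

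Next, I would exploit the structure of $f \in D_{c}$: writing $f(x) = c^{-n} x + v$, membership in $D_{c}$ is equivalent to $n \geq 0$ and $v \in c^{-(n+1)} \Z^d$. Set $j_{0} = n+1$ and partition $[0,1)^d$ into the $c$-adic cubes $\lambda_{k}$ of generation $j_{0}$. Each preimage $f^{-1}(\lambda_{k})$ is then a $c$-adic cube of generation~$1$, and I let $\lambda_{k}' := f^{-1}(\lambda_{k}) - \tau_{k} \subseteq [0,1)^d$ for the appropriate $\tau_{k} \in \Z^d$. A direct computation of the composition of dilations shows that $\omega_{\lambda_{k}} \circ f$, precomposed with translation by $\tau_{k}$, coincides with $\omega_{\lambda_{k}'}$ on $\lambda_{k}'$. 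Combining this with the $\Z^d$-invariance of $Y$ yields the key identity
$$
\omega_{\lambda_{k}}(\lambda_{k} \setminus f(Y)) = \omega_{\lambda_{k}'}(\lambda_{k}' \setminus Y).
$$

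From here the conclusion is immediate. Since $m(\lambda_{k}' \setminus Y) \leq m([0,1)^d \setminus Y) = 0$ and $\lambda_{k}' \subseteq [0,1)^d$, condition~(\ref{condselfsim}) applied to $\lambda_{k}'$ (mutual absolute continuity of $m^{\lambda_{k}'}$ and $m\restr{\lambda_{k}'}$) gives $m^{\lambda_{k}'}(\lambda_{k}' \setminus Y) = 0$; by the identity above this reads $m^{\lambda_{k}}(\lambda_{k} \setminus f(Y)) = 0$, and a second application of~(\ref{condselfsim}), now to $\lambda_{k}$, delivers $m(\lambda_{k} \setminus f(Y)) = 0$. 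Summing over $k$ finishes the proof. The main subtlety I anticipate is the coordinate bookkeeping identifying $\omega_{\lambda_{k}} \circ f$ with $\omega_{\lambda_{k}'}$ up to a $\Z^d$-translation; the necessity of working at generation $n+1$ rather than $n$ reflects the fact that $f([0,1)^d)$ need not itself be a single $c$-adic cube (in general it splits into $c^d$ cubes of generation $n+1$), which is precisely why condition~(\ref{condselfsim}) has to be invoked twice.
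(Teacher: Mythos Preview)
Your approach is essentially the paper's: partition $[0,1)^d$ into $c$-adic cubes at a suitable generation, relate $f$ on each piece to the canonical dilations $\omega_\lambda$ (the paper writes this as the factorization $f=\omega_\nu^{-1}\circ\omega_{t_\nu(f^{-1}(\nu))}\circ t_\nu$, which is the same relation as your $\omega_{\lambda_k}\circ f\circ(\cdot+\tau_k)=\omega_{\lambda_k'}$), and invoke~(\ref{condselfsim}) twice. Your preliminary reduction to $m([0,1)^d\setminus Y)=0$ via the $\Z^d$-invariance of $X$ and $\Sigma$ is a clean way to package what the paper does with the identity $m(t_\nu(f^{-1}(\nu)\setminus X))=\bar m(f^{-1}(\nu)\setminus X)$.

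There is, however, a concrete slip in your description of $D_c$. A dilation $f(x)=c^{-n}x+v$ lies in $D_c$ when it sends \emph{some} cube of generation $j\geq 1$ to a cube of generation $j'=j+n$, and then one only obtains $v\in c^{-j'}\Z^d$; the stronger condition $v\in c^{-(n+1)}\Z^d$ that you assert is equivalent to $j=1$ and need not hold. For instance, with $c=2$ and $d=1$, the translation $f(x)=x+\tfrac14$ is in $D_c$ (it maps $[0,\tfrac14)$ to $[\tfrac14,\tfrac12)$), yet $n=0$ and $v=\tfrac14\notin 2^{-1}\Z$. For this $f$, partitioning at your generation $j_0=n+1=1$ gives $f^{-1}\bigl([0,\tfrac12)\bigr)=[-\tfrac14,\tfrac14)$, which is not a $c$-adic interval, so the claim that each $f^{-1}(\lambda_k)$ is a $c$-adic cube of generation~$1$ fails. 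The fix is immediate: partition at generation $j'$ rather than $n+1$ (exactly as the paper does), so that each $f^{-1}(\lambda_k)$ has generation $j'-n=j\geq 1$; the rest of your argument then goes through unchanged.
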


\begin{proof}
First recall that $m([0,1)^d\backslash S)=0$. Moreover, there are $\lambda,\lambda'\in\Lambda_{c}$ with $0<\gene{\lambda}_{c}\leq\gene{\lambda'}_{c}$ such that the dilation $f$ maps $\lambda$ to $\lambda'$. In particular, the generation of $\lambda'$ is positive, so that
$$
m\left( [0,1)^d \backslash f(X) \right) = \sum_{\nu\subseteq [0,1)^d \atop \gene{\nu}_{c}=\gene{\lambda'}_{c}} m\left( \nu \backslash f(X) \right).
$$
Let $\nu$ be a $c$-adic subcube of $[0,1)^d$ enjoying $\gene{\nu}_{c}=\gene{\lambda'}_{c}$. Then $f^{-1}(\nu)$ is a $c$-adic cube with generation equal to that of $\lambda$ and so is the Borel subset $t_{\nu}(f^{-1}(\nu))$ of $[0,1)^d$, where $t_{\nu}$ is the translation which maps $\omega_{f^{-1}(\nu)}(f^{-1}(\nu))$ to $\omega_{\nu}(\nu)=[0,1)^d$. Furthermore, $f={\omega_{\nu}}^{-1}\circ \omega_{t_{\nu}(f^{-1}(\nu))}\circ t_{\nu}$ so that
\begin{equation*}\begin{split}
m\left( \nu \backslash f(X) \right) &= m\restr{\nu}\left( \nu \backslash {\omega_{\nu}}^{-1}\circ \omega_{t_{\nu}(f^{-1}(\nu))}\circ t_{\nu}(X) \right) \\
&= m\restr{\nu}\circ{\omega_{\nu}}^{-1}\left( [0,1)^d \backslash \omega_{t_{\nu}(f^{-1}(\nu))}\circ t_{\nu}(X) \right).
\end{split}\end{equation*}
As a result, $m(\nu\backslash f(X))=0$ if and only if $m([0,1)^d \backslash \omega_{t_{\nu}(f^{-1}(\nu))}\circ t_{\nu}(X))=0$, on account of~(\ref{condselfsim}). Meanwhile,
$$
m\left( [0,1)^d \backslash \omega_{t_{\nu}(f^{-1}(\nu))}\circ t_{\nu}(X) \right) = m^{t_{\nu}(f^{-1}(\nu))}\left( t_{\nu}(f^{-1}(\nu) \backslash X) \right).
$$
Hence $m([0,1)^d \backslash \omega_{t_{\nu}(f^{-1}(\nu))}\circ t_{\nu}(X))=0$ if and only if $m(t_{\nu}(f^{-1}(\nu) \backslash X))=0$, owing to~(\ref{condselfsim}) again. In addition, observe that
$$
m\left( t_{\nu}(f^{-1}(\nu) \backslash X) \right) = \bar m\left( f^{-1}(\nu) \backslash X \right) \leq \bar m\left(\R^d \backslash X \right)=0
$$
so that $m(\nu\backslash f(X))=0$ for every $c$-adic subcube $\nu$ of $[0,1)^d$ with generation equal to that of $\lambda'$. It follows that $m( [0,1)^d \backslash f(X) )=0$. Likewise, $m( [0,1)^d \backslash f(\Sigma) )=0$.
\end{proof}

Let $M\in (0,\infty)$ and $t\in [1,\infty)$. The following lemma shows that the set $E_{t}$ given by~(\ref{etlimsupballsim}) satisfies the same large intersection properties as
\begin{equation}\label{etlimsupballsimweak}\begin{split}
E_{t}^{\weak} &= \Z^d+\left\{ x\in [0,1]^d \:\bigl|\: \|x-x_{i}\|<{r_{i}}^t\text{ for infinitely many }i\in I^{\mu,\alpha,\weak}_{M,\psi} \right\} \\
&= \left\{ x\in\R^d \:\bigl|\: \|x-p-x_{i}\|<{r_{i}}^t\text{ for infinitely many }(i,p)\in I^{\mu,\alpha,\weak}_{M,\psi}\times\Z^d \right\}
\end{split}\end{equation}
where $I^{\mu,\alpha,\weak}_{M,\psi}$ denotes the set of all $i\in I$ such that
\begin{equation}\label{ballsimweakcond}\begin{split}
M^{-1} (2r_{i})^{\alpha+\psi(2r_{i})} &\leq \mu\restr{[0,1)^d}\left( B(\ell+x_{i},r_{i}) \right) \\
&\leq \mu\restr{[0,1)^d}\left( \bar B(\ell+x_{i},r_{i}) \right) \leq M (2r_{i})^{\alpha-\psi(2r_{i})}
\end{split}\end{equation}
for some $\ell\in\{-1,0,1\}^d$, so it suffices to prove Theorem~\ref{UBIQUITYHETER} with $E_{t}^{\weak}$ instead of $E_{t}$.

\begin{lem}\label{etweakgrint}
Let $M\in (0,\infty)$, let $t\in [1,\infty)$ and let $h$ be a gauge function such that $E_{t}^{\weak}$ belongs to $\grint^h(\R^d)$. Then $E_{t}$ belongs to $\grint^h(\R^d)$ as well.
\end{lem}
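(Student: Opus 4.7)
The plan is to deduce the lemma from the fact that $\grint^h(\R^d)$ is closed under supersets among $G_\delta$-subsets of $\R^d$, via the inclusion $E_t^{\weak}\subseteq E_t$ (possibly after a harmless enlargement of the constant $M$). First I would verify that both $E_t$ and $E_t^{\weak}$ are $G_\delta$-sets: since only finitely many $r_i$ exceed any $\varepsilon>0$, each admits a representation as $\bigcap_{N\in\N}V_N$, where $V_N$ is an open union of balls $B(p+x_i,r_i^t)$ over $r_i<1/N$, $p\in\Z^d$ and $i$ in the appropriate index set. The superset-closure of $\grint^h(\R^d)$ is immediate from the net-measure definition: for $F\subseteq F'$ both $G_\delta$ with $F\in\grint^h(\R^d)$, every gauge $g\prec h$ and every open $U\subseteq\R^d$ satisfy
\[
\netm^g_{\infty}(U)=\netm^g_{\infty}(F\cap U)\leq\netm^g_{\infty}(F'\cap U)\leq\netm^g_{\infty}(U),
\]
forcing $F'\in\grint^h(\R^d)$.

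The heart of the proof is to establish the index inclusion $I^{\mu,\alpha,\weak}_{M,\psi}\subseteq I^{\mu,\alpha}_{M',\psi}$ for some $M'=M'(M,d)\geq M$, which immediately delivers $E_t^{\weak}\subseteq E_t$ with the enlarged constant. This enlargement is inconsequential because $E_t$ is monotone nondecreasing in $M$ and because Theorem~\ref{UBIQUITYHETER} only requires the existence of some $M\in[1,\infty)$. Given $i\in I^{\mu,\alpha,\weak}_{M,\psi}$ with witness $\ell_i\in\{-1,0,1\}^d$, I would exploit the $\Z^d$-periodic decomposition: for $r_i<1/2$, the pieces $\bar B(\ell+x_i,r_i)\cap[0,1)^d$, $\ell\in\{-1,0,1\}^d$, are pairwise disjoint subsets of $[0,1)^d$ whose union corresponds, via the torus quotient $\R^d/\Z^d$, to the projection of $\bar B(x_i,r_i)$. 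One then controls $\mu(\bar B(x_i,r_i))$ above and below by summing the $3^d$ contributions: the $\ell_i$-piece supplies the essential bounds through the weak condition, while the remaining pieces are absorbed into a dimensional constant.

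\textbf{Main obstacle.} The key difficulty is the asymmetry between $\mu$ and $\mu\restr{[0,1)^d}$ on the boundary face $[0,1]^d\setminus[0,1)^d$: while the weak condition controls the $\mu\restr{[0,1)^d}$-mass of only a single shifted ball $\bar B(\ell_i+x_i,r_i)$, passage to the strong condition demands bilateral bounds on $\mu(\bar B(x_i,r_i))$, which includes the boundary-charged portion that the restricted measure discards. Resolving this relies on the heterogeneous ubiquitous system hypothesis, specifically condition~(\ref{condm}), which supplies $m$-almost sure upper bounds on the $\mu$-mass of nearby $c$-adic cubes; combined with combinatorial bookkeeping over the $3^d$ shifts, this controls the boundary-face contributions uniformly. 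Closing the inclusion with a uniform constant $M'(M,d)$ then reduces to verifying the compatibility of the periodic decomposition with the positions of $x_i$ relative to the faces of $[0,1]^d$.
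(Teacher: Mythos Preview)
Your approach has a genuine gap: the index inclusion $I^{\mu,\alpha,\weak}_{M,\psi}\subseteq I^{\mu,\alpha}_{M',\psi}$ that you aim for is simply false in general, and no enlargement of $M$ rescues it. The reason is that $\mu$ is \emph{not} $\Z^d$-periodic---it is supported on $[0,1]^d$---so your ``periodic decomposition'' does not relate $\mu(\bar B(x_i,r_i))$ to the quantities $\mu\restr{[0,1)^d}(\bar B(\ell+x_i,r_i))$. Concretely, if $x_i$ lies close to a face of $[0,1]^d$ and $\mu$ places almost no mass near that face, then $\mu(B(x_i,r_i))$ can be far below the required lower bound $(M')^{-1}(2r_i)^{\alpha+\psi(2r_i)}$, while the weak condition may still hold with a witness $\ell\neq 0$ picking out a region near the opposite face where $\mu$ is large. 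Your invocation of condition~(\ref{condm}) does not help either: that condition controls the measure $m$, not $\mu$, and in any case conditions~(\ref{condm})--(\ref{condmu}) are $m$-a.e.\ statements, giving no uniform control over arbitrary indices $i\in I$.

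The paper sidesteps this entirely. Rather than comparing index sets, it intersects $E_t^{\weak}$ with the open set $F=\Z^d+(0,1)^d$ of full Lebesgue measure (so $E_t^{\weak}\cap F$ still lies in $\grint^h(\R^d)$ by the stability results of~\cite{Durand:2007uq}) and then proves the set inclusion $E_t^{\weak}\cap F\subseteq E_t$. The point is that for $x\in E_t^{\weak}\cap F$, eventually the approximating centers $x_{i_n}$ are forced to lie well inside $(0,1)^d$, so that $\bar B(x_{i_n},r_{i_n})\subseteq (0,1)^d$; then any witness $\ell\neq 0$ would make $B(\ell+x_{i_n},r_{i_n})$ disjoint from $[0,1)^d$, contradicting the positive lower bound in the weak condition. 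Hence $\ell=0$ and the strong condition~(\ref{ballsimcond}) holds for $i_n$, with the \emph{same} constant $M$. Your $G_\delta$ and superset-closure observations are fine and are used in the paper as well; what is missing from your plan is precisely this localisation to $F$, which is what makes the boundary obstruction disappear.
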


\begin{proof}
First note that $E_{t}^{\weak}$ and $E_{t}$ are $G_{\delta}$-sets. Moreover, the open set
\begin{equation}\label{defFunioncubouv}
F = \Z^d+(0,1)^d = \bigsqcup_{q\in\Z^d} \left( q+(0,1)^d \right)
\end{equation}
has full Lebesgue measure in $\R^d$. Thus Proposition~11 and Theorem~1 in~\cite{Durand:2007uq} ensure that $E_{t}^{\weak}\cap F\in\grint^h(\R^d)$. Thanks to Proposition~1 in~\cite{Durand:2007uq}, it suffices to show that $E_{t}\supseteq E_{t}^{\weak}\cap F$. Let $x\in E_{t}^{\weak}\cap F$. There is an injective sequence $(i_{n},p_{n})_{n\in\N}$ in $I^{\mu,\alpha,\weak}_{M,\psi}\times\Z^d$ such that $\|x-p_{n}-x_{i_{n}}\|<{r_{i_{n}}}^t$ for all $n\in\N$. Let us write $x=q+\dot x$ where $q\in\Z^d$ and $\dot x\in (0,1)^d$. We have $B(\dot x,2\delta)\subseteq (0,1)^d$ for some $\delta\in (0,\infty)$ and ${r_{i_{n}}}^t<\delta$ for $n$ large enough. For all such $n$,
$$
p_{n}+x_{i_{n}}\in B(x,{r_{i_{n}}}^t) \subseteq q+B(\dot x,\delta)\subseteq q+(0,1)^d.
$$
As a consequence, $p_{n}=q$ and $x_{i_{n}}\in B(\dot x,\delta)$. Thus $\bar B(x_{i_{n}},r_{i_{n}})\subseteq B(\dot x,2\delta)\subseteq (0,1)^d$. Furthermore,~(\ref{ballsimweakcond}) holds for some $\ell\in\{-1,0,1\}^d$, since $i_{n}\in I^{\mu,\alpha,\weak}_{M,\psi}$. We necessarily have $\mu(B(\ell+x_{i_{n}},r_{i_{n}}))>0$ so that $\ell=(0,\ldots,0)$ and~(\ref{ballsimcond}) holds. It follows that $i_{n}\in I^{\mu,\alpha}_{M,\psi}$. As a result, $x\in E_{t}$.
\end{proof}

In the sequel, $M=\max( 3^d c^\alpha , 3^d c^d , 2^{1+3\alpha}c^\alpha )$ and $|f|$ denotes the ratio of any dilation $f\in D_{c}$. Note that $|f|\leq 1$.

\begin{lem}\label{lemfXSfSigma}
Let $f\in D_{c}$ and $z\in (0,1)^d\cap f(X)\cap S\cap f(\Sigma)$. Then for some infinite subset $I_{f}(z)$ of $I^{\mu,\alpha,\weak}_{M,\psi}$ and every $i\in I_{f}(z)$ the following properties hold:
\begin{itemize}
\item $\|f^{-1}(z)-p-x_{i}\|\leq r_{i}/2$ for some $p\in\Z^d$,
\item $\bar B(z,2|f|r_{i})\subseteq (0,1)^d$ and $m( \bar B(z,2|f|r_{i}) )\leq M h_{\beta,\ph}( 4|f|r_{i} )$.
\end{itemize}
\end{lem}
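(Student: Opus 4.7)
My plan is to extract $I_{f}(z)$ from the infinitely many pairs $(i,p)\in I\times\Z^d$ granted by $f^{-1}(z)\in X$. Writing $f^{-1}(z)=q+y$ with $q\in\Z^d$ and $y\in[0,1)^d$, the inequality $\|f^{-1}(z)-p-x_{i}\|\leq r_{i}/2$ becomes $\|y-(x_{i}+\ell)\|\leq r_{i}/2$ where $\ell:=p-q$. Since $0$ is the only cluster point of $(r_{i})_{i\in I}$, the indices $i$ in this infinite family satisfy $r_{i}\to 0$, and as $x_{i}\in[0,1]^d$ and $y\in[0,1)^d$, the integer vector $\ell$ necessarily lies in $\{-1,0,1\}^d$ as soon as $r_{i}<1/2$. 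I define $I_{f}(z)$ to be the (still infinite) cofinite sub-collection of those indices for which, in addition, $2|f|r_{i}$ is smaller than the distance from $z$ to $\partial(0,1)^d$ and $-\log_{c}(4|f|r_{i})$ exceeds both thresholds $j(z)$ and $j(f^{-1}(z))$ furnished by~(\ref{condm}) at $z\in S$ and~(\ref{condmu}) at $f^{-1}(z)\in\Sigma$. The first two bullets then hold by construction, and only the third bullet together with the membership $i\in I^{\mu,\alpha,\weak}_{M,\psi}$ remain to be checked.

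To control $m(\bar B(z,2|f|r_{i}))$ I would choose $j$ maximal with $c^{-j}\geq 4|f|r_{i}$, hence $c^{-j}<4c|f|r_{i}$. The ball is a cube of side $4|f|r_{i}\leq c^{-j}$, so it meets at most $2^d$ cubes of generation $j$, and each such cube is contained in $3\lambda^c_{j}(z)$ because $z\in\lambda^c_{j}(z)$. Applying~(\ref{condm}) and then the nonincreasing character of $r\mapsto h_{\beta,\ph}(r)/r^d$ to transfer $h_{\beta,\ph}(c^{-j})$ to $h_{\beta,\ph}(4|f|r_{i})$ gives
$$
m(\bar B(z,2|f|r_{i}))\leq 2^d h_{\beta,\ph}(c^{-j})\leq 2^d c^d h_{\beta,\ph}(4|f|r_{i})\leq M h_{\beta,\ph}(4|f|r_{i}),
$$
because $M\geq 3^d c^d$.

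For the membership in $I^{\mu,\alpha,\weak}_{M,\psi}$ I would invoke~(\ref{condmu}) at $f^{-1}(z)\in\Sigma$, using $3\lambda^c_{j}(f^{-1}(z)\modu\Z^d)=3\lambda^c_{j}(y)$. For the lower bound in~(\ref{ballsimweakcond}) I pick $j$ minimal with $c^{-j}<r_{i}/2$, so that $c^{-j}\geq r_{i}/(2c)$. Then $\|y-(x_{i}+\ell)\|\leq r_{i}/2$ and the triangle inequality yield $\lambda^c_{j}(y)\subseteq B(x_{i}+\ell,r_{i})$, and $\lambda^c_{j}(y)\subseteq[0,1)^d$ because $y\in[0,1)^d$, so~(\ref{condmu}) supplies
$$
\mu\restr{[0,1)^d}(B(x_{i}+\ell,r_{i}))\geq (c^{-j})^{\alpha+\psi(c^{-j})}\geq (4c)^{-\alpha-\psi(2r_{i})}(2r_{i})^{\alpha+\psi(2r_{i})},
$$
the last inequality following from $r_{i}/(2c)\leq c^{-j}\leq 2r_{i}$ together with the monotonicity of $\psi$. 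Since $\psi(2r_{i})\to 0$ and $M\geq 2^{1+3\alpha}c^{\alpha}$, the right-hand side exceeds $M^{-1}(2r_{i})^{\alpha+\psi(2r_{i})}$ once $r_{i}$ is small enough. For the upper bound I take $j$ maximal with $c^{-j}\geq 2r_{i}$, observe that $\bar B(x_{i}+\ell,r_{i})\subseteq\bar B(y,3r_{i}/2)\subseteq 3\lambda^c_{j}(y)$ and that the ball is covered by at most $2^d$ generation-$j$ cubes, each lying in $3\lambda^c_{j}(y)$. Summing the upper estimate from~(\ref{condmu}) and using that $r\mapsto r^{-\psi(r)}$ is nonincreasing to trade $c^{-j}$ for $2r_{i}$ delivers
$$
\mu\restr{[0,1)^d}(\bar B(x_{i}+\ell,r_{i}))\leq 2^d c^{\alpha}(2r_{i})^{\alpha-\psi(2r_{i})}\leq M(2r_{i})^{\alpha-\psi(2r_{i})},
$$
since $M\geq 3^d c^{\alpha}$.

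The main technical obstacle is the bookkeeping: every multiplicative constant produced when passing from a $c$-adic cube of diameter comparable to $r_{i}$ to a metric ball of radius $r_{i}$ must be absorbed into one of the three terms defining $M=\max(3^d c^\alpha,3^d c^d,2^{1+3\alpha}c^\alpha)$, and the exponent correction $\psi(2r_{i})$ has to be dissolved using its monotonicity relations together with the vanishing $\psi(2r_{i})\to 0$. Restricting to $r_{i}$ sufficiently small removes only finitely many indices, so that $I_{f}(z)$ retains the requested infiniteness.
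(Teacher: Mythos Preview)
Your argument follows the same route as the paper's: extract the infinite family from $X$, restrict to small $r_i$, use (\ref{condm}) at $z$ for the $m$-bound and (\ref{condmu}) at $y=f^{-1}(z)\modu\Z^d$ for the $\mu$-bounds. The covering estimates with $2^d$ cubes in place of the paper's $3^d$ are fine and the constant comparisons against $M=\max(3^d c^\alpha,3^d c^d,2^{1+3\alpha}c^\alpha)$ all go through.

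There is one bookkeeping slip in your definition of $I_f(z)$: you impose the $\Sigma$-threshold on $4|f|r_i$, but the generations $j$ appearing in the $\mu$-estimates are comparable to $-\log_c r_i$, not to $-\log_c(|f|r_i)$. Since $|f|$ may be any negative power of $c$, the condition $-\log_c(4|f|r_i)>j(f^{-1}(z))$ does not force the $j$'s used for (\ref{condmu}) to exceed $j(f^{-1}(z))$. The fix is to require $r_i$ itself to be small (the paper imposes $4r_i\leq c^{-j_0}$ for a single $j_0$ chosen at the outset), which is exactly what your final paragraph alludes to; you should move that restriction into the formal definition of $I_f(z)$.
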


\begin{proof}
Let $x=f^{-1}(z)$ and $\dot x=x\modu\Z^d$. As $h_{\beta,\ph}\in\jauge_{d}$ and $\psi\in\Psi$ and by definition of $S$ and $\Sigma$, there is an integer $j_{0}\in\N$ such that the function $r\mapsto h_{\beta,\ph}(r)/r^d$ is nonincreasing on $(0,c^{-j_{0}})$, the function $r\mapsto r^{-\psi(r)}$ is nonincreasing on $(0,8c^{-j_{0}+1}]$, the inequality $(8cr)^{\psi(8cr)}\leq 2r^{\psi(r)}$ holds for all $r\in (0,c^{-j_{0}}]$ and for every integer $j\geq j_{0}$ and every $k\in\intn{0}{c^j-1}^d$,
$$
\left\{\begin{array}{lcl}
\lambda^c_{j,k} \subseteq 3\lambda^c_{j}(z) & \Longrightarrow & m(\lambda^c_{j,k})\leq h_{\beta,\ph}(\diam{\lambda^c_{j,k}}) \\[2mm]
\lambda^c_{j,k} \subseteq 3\lambda^c_{j}(\dot x) & \Longrightarrow & \diam{\lambda^c_{j,k}}^{\alpha+\psi(\diam{\lambda^c_{j,k}})} \leq \mu(\lambda^c_{j,k}) \leq \diam{\lambda^c_{j,k}}^{\alpha-\psi(\diam{\lambda^c_{j,k}})}
\end{array}\right. .
$$
Furthermore, by definition of $X$, for some infinite subset $I'$ of $I$ and every $i\in I'$, we have $x\in \bar B(p+x_{i},r_{i}/2)$ for some $p\in\Z^d$. In addition, since $z\in (0,1)^d$, we have $4r_{i}\leq c^{-j_{0}}$ and $\bar B(z,2|f|r_{i})\subseteq (0,1)^d$ for each $i$ in some infinite subset $I_{f}(z)$ of $I'$.

Let $i\in I_{f}(z)$ and let $j_{1}$ be the largest integer such that $4|f|r_{i}\leq c^{-j_{1}}$. Note that $j_{1}\geq j_{0}$. Thus $m(\lambda^c_{j_{1},k})\leq h_{\beta,\ph}(\diam{\lambda^c_{j_{1},k}})$ for each $k\in\intn{0}{c^{j_{1}}-1}^d$ with $\lambda^c_{j_{1},k} \subseteq 3\lambda^c_{j_{1}}(z)$. This results in $m(\bar B(z,2|f|r_{i})) \leq 3^d h_{\beta,\ph}( c^{-j_{1}} ) \leq M h_{\beta,\ph}(4|f|r_{i})$ because $\bar B(z,2|f|r_{i})\subseteq 3\lambda^c_{j_{1}}(z)$ and $r\mapsto r^{-d}h_{\beta,\ph}(r)$ is nonincreasing on $(0,c^{-j_{0}})$.

It remains to show that $I_{f}(z)\subseteq I^{\mu,\alpha,\weak}_{M,\psi}$. Let $i\in I_{f}(z)$, let $j_{2}$ be the smallest integer such that $c^{-j_{2}}\leq r_{i}/4$ and $j_{3}$ the largest integer such that $2r_{i}\leq c^{-j_{3}}$. We have $j_{2}\geq j_{3}\geq j_{0}$, so that $\mu(\lambda^c_{j_{2}}(\dot x))\geq (c^{-j_{2}})^{\alpha+\psi(c^{-j_{2}})}$ and $\mu(\lambda^c_{j_{3},k}) \leq (c^{-j_{3}})^{\alpha-\psi(c^{-j_{3}})}$ for each $k\in\intn{0}{c^{j_{3}}-1}^d$ with $\lambda^c_{j_{3},k} \subseteq 3\lambda^c_{j_{3}}(\dot x)$. Since $i\in I_{f}(z)\subseteq I'$, we have $x\in \bar B(p+x_{i},r_{i}/2)$ for some $p\in\Z^d$. Thus $\dot x=x-p+\ell\in\bar B(\ell+x_{i},r_{i}/2)$ for some $\ell\in\{-1,0,1\}^d$. As a result, $\lambda^c_{j_{2}}(\dot x) \subseteq B(\ell+x_{i},r_{i}) \subseteq \bar B(\ell+x_{i},r_{i}) \subseteq 3\lambda^c_{j_{3}}(\dot x)$ so that
\begin{equation*}\begin{split}
(c^{-j_{2}})^{\alpha+\psi(c^{-j_{2}})} &\leq \mu\restr{[0,1)^d}\left( B(\ell+x_{i},r_{i}) \right) \\
&\leq \mu\restr{[0,1)^d}\left( \bar B(\ell+x_{i},r_{i}) \right) \leq 3^d (c^{-j_{3}})^{\alpha-\psi(c^{-j_{3}})}.
\end{split}\end{equation*}
Recall that the function $r\mapsto r^{-\psi(r)}$ is nonincreasing on $(0,8c^{-j_{0}+1}]$ and that the inequality $(8cr)^{\psi(8cr)}\leq 2 r^{\psi(r)}$ holds for all $r\in (0,c^{-j_{0}}]$. Therefore the right-hand side is at most $M (2r_{i})^{\alpha-\psi(2r_{i})}$ and the left-hand side is at least $(2r_{i})^{\alpha+\psi(2r_{i})}/M$. We end up with~(\ref{ballsimweakcond}), so that $i\in I^{\mu,\alpha,\weak}_{M,\psi}$.
\end{proof}

\subsection{Proof of Theorem~\ref{UBIQUITYHETER}}

By Lemma~\ref{etweakgrint}, it suffices to establish that the set $E_{t}^{\weak}$ given by~(\ref{etlimsupballsimweak}) belongs to $\grint^{h_{\beta,\ph}}(\R^d)$ if $t=1$ and to $\grint^{h_{\beta/t,2\ph+\phi}}(\R^d)$ for all $\phi\in\Phi$ if $t>1$. Furthermore, Proposition~\ref{HAUSIMDCGRINT} ensures that it is enough to show that
\begin{equation}\label{hauxicapdcstricpos}
\hau^{h_{\beta,\ph}}\left( \bigcap_{n=1}^\infty f_{n}(E_{1}^{\weak}) \right)>0 \qquad\text{and}\qquad \hau^{h_{\beta/t,2\ph+\phi}}\left( \bigcap_{n=1}^\infty f_{n}(E_{t}^{\weak}) \right)>0
\end{equation}
for every sequence $(f_{n})_{n\in\N}$ of similarities in $D_{c}$.

Assume that $t=1$. By Lemma~\ref{fXSfSigmamespleine} and Lemma~\ref{lemfXSfSigma}, the set $(0,1)^d\cap f_{n}(X)\cap S\cap f_{n}(\Sigma)$ has full $m$-measure in $(0,1)^d$ and is included in $f_{n}(E_{1}^{\weak})$, for all $n\in\N$. Thus
$$
A=(0,1)^d\cap S \cap \bigcap_{n=1}^\infty \left( f_{n}(X)\cap f_{n}(\Sigma)\right)
$$
has full $m$-measure in $(0,1)^d$ and is included in $\bigcap_{n} f_{n}(E_{1}^{\weak})$. Furthermore, imitating the part of the proof of Lemma~\ref{lemfXSfSigma} which deals with $m$, one easily shows that
$$
\sup_{x\in A}\limsup_{r\to 0} \frac{m(B(x,r))}{h_{\beta,\ph}(2r)}<\infty.
$$
Then Lemma~\ref{massdistprinc} below implies that $\hau^{h_{\beta,\ph}}(A)>0$. Hence~(\ref{hauxicapdcstricpos}) holds for $t=1$.

\begin{lem}\label{massdistprinc}
Let $F$ be a Borel subset of $\R^d$ and let $\pi$ be a finite Borel measure on $\R^d$ with $\pi(F)>0$. Then $\hau^h(F)>0$ for any $h\in\jauge_{d}$ such that
$$
\sup_{x\in F}\limsup_{r\to 0} \frac{\pi(B(x,r))}{h(2r)}<\infty.
$$
\end{lem}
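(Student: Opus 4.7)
The plan is the classical mass distribution principle, adapted to a general gauge function. First I would stratify $F$ by local density: define
$$
F_N = \left\{ x \in F \:\bigl|\: \pi(B(x,r)) \leq N\, h(2r) \text{ for all } r \in (0,1/N) \right\},
$$
and observe that the hypothesis on the pointwise limsup forces $F = \bigcup_{N\in\N} F_N$, since for each $x \in F$ we can pick $N$ larger than both the limsup and the reciprocal of a threshold radius. As $\pi(F) > 0$, countable subadditivity of the (outer) measure yields an index $N_0$ with $\pi(F_{N_0}) > 0$; since $F_{N_0} \subseteq F$, it suffices to prove $\hau^h(F_{N_0}) > 0$.

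Next I would fix $\delta > 0$ with $2\delta < \min(1/N_0,\eps_h)$ and take an arbitrary sequence $(U_p)_{p\in\N}$ covering $F_{N_0}$ with $\diam{U_p} < \delta$. Keeping only the indices $p$ for which $U_p$ meets $F_{N_0}$ (this only decreases $\sum h(\diam{U_p})$) and picking $x_p \in U_p \cap F_{N_0}$, the supremum-norm inclusion $U_p \subseteq \bar B(x_p,\diam{U_p})$, combined with continuity from above of the finite measure $\pi$ and continuity of $h$, gives
$$
\pi(U_p) \leq \pi\left(\bar B(x_p,\diam{U_p})\right) = \lim_{\eps\downarrow 0}\pi\left(B(x_p,\diam{U_p}+\eps)\right) \leq N_0\, h(2\diam{U_p}).
$$
The defining property of $\jauge_d$ that $r\mapsto h(r)/r^d$ is nonincreasing on $(0,\eps_h]$ yields the doubling bound $h(2r)\leq 2^d h(r)$ for $r\in (0,\eps_h/2]$, so $\pi(U_p)\leq N_0\, 2^d\, h(\diam{U_p})$. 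Summing over $p$ and using $F_{N_0}\subseteq\bigcup_p U_p$:
$$
\pi(F_{N_0}) \leq \sum_{p=1}^\infty \pi(U_p) \leq N_0\, 2^d \sum_{p=1}^\infty h(\diam{U_p}).
$$
Taking the infimum over admissible covers gives the pre-measure bound $\hau^h_\delta(F_{N_0})\geq \pi(F_{N_0})/(N_0\, 2^d)$, and letting $\delta\downarrow 0$ produces $\hau^h(F)\geq \hau^h(F_{N_0}) \geq \pi(F_{N_0})/(N_0\, 2^d) > 0$.

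No serious obstacle arises; this is a standard argument. The only points requiring any care are the diameter-to-radius conversion and the passage from open to closed balls, both absorbed into the factor $2^d$ via continuity of $\pi$ (finite) and of $h$ together with the monotonicity of $r\mapsto h(r)/r^d$ built into the definition of $\jauge_d$. A measurability subtlety with the strata $F_N$ can either be dispatched by working with outer measure or by noting that continuity properties of $\pi(B(\cdot,r))$ render the $F_N$ Borel.
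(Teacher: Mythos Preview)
Your argument is correct and is precisely the standard mass distribution principle adapted to a general gauge function; the paper itself gives no detailed proof, merely citing Proposition~4.9 in Falconer's \emph{Fractal Geometry} as the model. Your stratification into the sets $F_N$ to pass from the pointwise $\limsup$ hypothesis to a uniform density bound, followed by the covering estimate with the doubling bound $h(2r)\le 2^d h(r)$ coming from the monotonicity of $r\mapsto h(r)/r^d$, is exactly the expected route.
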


\begin{proof}
This is a straightforward generalization of Proposition~4.9 in~\cite{Falconer:2003oj}.
\end{proof}

From now on, we suppose that $t>1$. Let $\phi\in\Phi$. With a view to applying Lemma~\ref{massdistprinc} in order to establish~(\ref{hauxicapdcstricpos}), we shall build a generalized Cantor set $K$ along with a Borel probability measure $\pi$ supported on $K$ such that
\begin{equation}\label{eqpropKpi}
K\subseteq\bigcap_{n=1}^\infty f_{n}(E_{t}^{\weak}) \quad\text{and}\quad \exists C\in (0,\infty) \quad \forall B \quad \pi(B) \leq C h_{\beta/t,2\ph+\phi}(\diam{B})
\end{equation}
where $B$ denotes an open ball of small radius. To this end, we need to introduce some additional notations. Recall that the set $S$ has full $m$-measure in $(0,1)^d$. Meanwhile, $S=\bigcup_{j_{0}}\uparrow S^{\lambda}_{j_{0}}$ where $\lambda=[0,1)^d$ and
$$
S^{\lambda}_{j_{0}} = \left\{ x\in\interieur{\lambda} \:\Biggl|\:
\begin{array}{l}
\forall j\geq\gene{\lambda}_{c}+j_{0} \quad \forall k\in\intn{0}{c^j-1}^d \\
\lambda^c_{j,k} \subseteq 3\lambda^c_{j}(x)\cap\lambda \Longrightarrow m^{\lambda}(\lambda^c_{j,k})\leq h_{\beta,\ph}\left(\frac{\diam{\lambda^c_{j,k}}}{\diam{\lambda}}\right)
\end{array}
\right\}.
$$
Together with the fact that $\ph\in\Phi$, this implies that $r\mapsto r^{-\ph(r)}$ is nonincreasing on $(0,c^{-j_{0}+1}]$ and $m(S^{[0,1)^d}_{j_{0}})\geq |m|/2$ for some $j_{0}\in\N$, where $|m|=m((0,1)^d)$. Observe that $|m^\lambda|=m^\lambda(\interieur{\lambda})=|m|$ and $m^\lambda(S^{\lambda}_{j_{0}})\geq |m^\lambda|/2=|m|/2$ for every $c$-adic subcube $\lambda$ of $[0,1)^d$. Furthermore, since $\phi/(1+d)\in\Phi$, there is an integer $j_{1}\geq j_{0}$ such that the function $r\mapsto r^{-\phi(r)/(1+d)}$ is nonincreasing and greater than $c^{j_{0}}$ on $(0,c^{-j_{1}}]$ and such that the function $h_{\beta/t,2\ph+\phi/(1+d)}$ increases in the same interval. In addition, let $(n_{q})_{q\in\N}=(0,0,1,0,1,2,0,1,2,3,0,1,2,3,4,\ldots)$.

The construction of the generalized Cantor set $K$ calls upon the following covering theorem of Besicovitch, see~\cite[Theorem~2.7]{Mattila:1995fk}.

\begin{thm}[Besicovitch]\label{bescov}
Let $A$ be a bounded subset of $\R^d$ and let $\mathcal{B}$ be a family of closed balls such that each point in $A$ is the center of some ball of $\mathcal{B}$. Then, there are families $\mathcal{B}_{1},\ldots,\mathcal{B}_{Q(d)}\subset\mathcal{B}$ such that:\begin{itemize}
\item for every $\ell\in\intn{1}{Q(d)}$, the balls of $\mathcal{B}_{\ell}$ are disjoint;
\item the balls of $\mathcal{B}_{\ell}$, $\ell\in\intn{1}{Q(d)}$, cover $A$.
\end{itemize}
Here, $Q(d)$ denotes a positive integer which depends on $d$ only.
\end{thm}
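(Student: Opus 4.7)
The plan is to prove Besicovitch's covering theorem in two stages: first extract from $\mathcal{B}$ a countable subfamily whose balls cover $A$ by a greedy selection that controls the sizes of successive radii, and then partition that subfamily into $Q(d)$ classes of pairwise disjoint balls by coloring the intersection graph, using a dimensional packing estimate to bound the chromatic number. The constant $Q(d)$ will emerge purely from Euclidean geometry in $\R^d$.

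I would first reduce to the case where $R = \sup\{r : \bar B(x,r) \in \mathcal{B},\ x \in A\}$ is finite, since otherwise a single ball of sufficiently large radius covers the bounded set $A$. Set $A_{1} = A$, and inductively choose $B_{k} = \bar B(x_{k}, r_{k}) \in \mathcal{B}$ with $x_{k} \in A_{k}$ and $r_{k} \geq \tfrac{1}{2}\sup\{r : \bar B(x,r) \in \mathcal{B},\ x \in A_{k}\}$, then put $A_{k+1} = A_{k} \setminus B_{k}$. If some $A_{k+1}$ is empty, the construction terminates and trivially covers $A$. Otherwise, the infinite sequence satisfies $r_{k} \to 0$ (the centers $x_{k}$ remain in a bounded set and the inequality $\|x_{j} - x_{k}\| > r_{j}$ for $j<k$, together with boundedness, forces $r_{k}\to 0$), and then every point $x \in A$ — being the center of some ball of positive radius in $\mathcal{B}$ — must eventually belong to some $B_{k}$ by maximality of $r_{k}$.

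The combinatorial heart of the proof is a packing lemma: there is $P(d)$ such that for each $k$, at most $P(d)$ indices $j < k$ satisfy $B_{j} \cap B_{k} \neq \emptyset$. The greedy rule provides two ingredients. First, $x_{k} \notin B_{j}$ for $j < k$, so $\|x_{j} - x_{k}\| > r_{j}$; second, $x_{k} \in A_{j}$ was available when $B_{j}$ was selected, so $r_{j} \geq r_{k}/2$. When $B_{j} \cap B_{k} \neq \emptyset$ one also has $\|x_{j} - x_{k}\| \leq r_{j} + r_{k}$. I would split the offending $j$'s into those with $r_{j} \leq 3 r_{k}$ and those with $r_{j} > 3 r_{k}$; in the first case all the $x_{j}$ lie in $\bar B(x_{k}, 4 r_{k})$ and are pairwise separated by at least $r_{k}/2$, so their number is bounded by a volume count in $\R^{d}$; in the second case $x_{k}$ lies within $r_{j}$ of $x_{j}$, so the unit vectors $(x_{j}-x_{k})/\|x_{j}-x_{k}\|$ are pairwise separated by an angle bounded below (again by the inequality $\|x_{j_{1}}-x_{j_{2}}\|>\min(r_{j_{1}},r_{j_{2}})$ combined with $\|x_{j_{i}}-x_{k}\|\leq r_{j_{i}}+r_{k}$), and a sphere packing estimate on $S^{d-1}$ bounds their number.

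Granted this lemma, the coloring argument is routine: inductively assign $B_{k}$ the smallest label in $\intn{1}{Q(d)}$ (with $Q(d) = P(d) + 1$) that is not already used by any $B_{j}$ with $j<k$ and $B_{j} \cap B_{k} \neq \emptyset$; by the lemma there are at most $P(d)$ forbidden labels, so some label is available. Let $\mathcal{B}_{\ell}$ be the collection of balls of color $\ell$; distinct balls in $\mathcal{B}_{\ell}$ must be disjoint by construction, and the union $\bigcup_{\ell} \mathcal{B}_{\ell}$ equals the selected family, hence covers $A$. The principal obstacle is the packing lemma, and within it the second case (a ball $B_{j}$ much larger than $B_{k}$ still intersecting $B_{k}$), because naive volume counting in a ball around $x_{k}$ fails; the angular/spherical packing trick is what keeps $Q(d)$ depending only on $d$.
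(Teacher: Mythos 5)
The paper does not prove this statement: it is quoted as Theorem~2.7 of Mattila's book (reference \cite{Mattila:1995fk}) and used as a black box, so there is no internal argument to compare against. Your outline follows the standard route (greedy selection of balls via the $\tfrac{1}{2}$-sup rule, a packing lemma bounding the number of previously chosen balls that can meet a given one, then greedy coloring of the intersection graph), and the covering step, the argument that $r_{k}\to 0$, the volume count for the ``small'' balls with $r_{j}\leq 3r_{k}$, and the coloring step are all correct as sketched.

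The gap is in the angular estimate for the ``large'' balls with $r_{j}>3r_{k}$. You claim that $\|x_{j_{1}}-x_{j_{2}}\|>\min(r_{j_{1}},r_{j_{2}})$ and $\|x_{j_{i}}-x_{k}\|\leq r_{j_{i}}+r_{k}$ force a positive lower bound on the angle $\angle x_{j_{1}}x_{k}x_{j_{2}}$; this is false. Take $x_{k}=0$, $r_{k}$ small, $r_{j_{1}}=1$, $r_{j_{2}}=2$, with $x_{j_{1}}$ and $x_{j_{2}}$ on the same ray from $0$, $1<\|x_{j_{1}}\|\le 1+r_{k}$, $2<\|x_{j_{2}}\|\le 2+r_{k}$ and $\|x_{j_{2}}\|-\|x_{j_{1}}\|>1$. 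Every constraint produced by your construction is met ($x_{k}\notin B_{j_{1}}\cup B_{j_{2}}$, $x_{j_{2}}\notin B_{j_{1}}$, $r_{j_{2}}\le 2r_{j_{1}}$, $r_{j_{i}}>3r_{k}$, both balls meet $B_{k}$), yet the angle is zero. The standard proofs (Mattila, Evans--Gariepy, de~Guzm\'an) instead show that a small angle forces one center to lie in the other ball ($x_{j_{1}}\in B_{j_{2}}$ in this example), and that such containments cannot chain: three large balls on one ray would force $r_{j_{2}}<r_{k}$, contradicting $r_{j_{2}}>3r_{k}$. The angle estimate therefore has to be made conditional on the absence of containment, and the containment chains counted separately within each narrow cone, before the $S^{d-1}$-covering count yields $P(d)$. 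As written, the packing lemma is not established and the argument does not close.
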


We now detail the construction of $K$ and $\pi$. Let $G_{0}=\{[0,1)^d\}$ and $\pi([0,1]^d)=1$.

\subsubsection*{Step 1}

As $\phi/(1+d)\in\Phi$, for some integer $j\geq\max(j_{0}+5,j_{1},(\log_{c}2)/(t-1))$,
\begin{equation}\label{majcrocetap1}
\forall r\in (0,c^{-j}]\qquad  r^{-\phi(r)/(1+d)}\geq M \kappa^{\beta/t} \frac{4Q(d)}{|m|}
\end{equation}
where $\kappa=6 c^2 4^t$. Moreover, by Lemma~\ref{fXSfSigmamespleine}, the set $E^{[0,1)^d}=f_{n_{1}}(X)\cap S^{[0,1)^d}_{j_{0}}\cap f_{n_{1}}(\Sigma)$ has full $m$-measure in $S^{[0,1)^d}_{j_{0}}$ and, by Lemma~\ref{lemfXSfSigma}, for each point $z\in E^{[0,1)^d}$, there exists $i_{z}\in I_{f_{n_{1}}}(z)\subseteq I^{\mu,\alpha,\weak}_{M,\psi}$ such that $2r_{i_{z}}\leq c^{-j}$ and:\begin{itemize}
\item $\exists p_{z}\in\Z^d \quad \|{f_{n_{1}}}^{-1}(z)-p_{z}-x_{i_{z}}\|\leq r_{i_{z}}/2$,
\item $B_{z}=\bar B(z,2|f_{n_{1}}|r_{i_{z}})\subseteq (0,1)^d \quad\text{and}\quad m(B_{z})\leq M h_{\beta,\ph}( \diam{B_{z}} )$.
\end{itemize}
In addition, let $F_{z}$ be the open ball with center $f_{n_{1}}(p_{z}+x_{i_{z}})$ and radius $(|f_{n_{1}}|r_{i_{z}})^t$. Observe that $F_{z}\subseteq B_{z}\cap f_{n_{1}}(B(p_{z}+x_{i_{z}},{r_{i_{z}}}^t))$. Applying Besicovitch's covering theorem, we obtain families $\mathcal{B}_{1},\ldots,\mathcal{B}_{Q(d)}$ of points in $E^{[0,1)^d}$ such that the balls $B_{z}$, for $z\in\mathcal{B}_{\ell}$ and $\ell\in\intn{1}{Q(d)}$, cover $E^{[0,1)^d}$ and such that the balls $B_{z}$, $z\in\mathcal{B}_{\ell}$, are disjoint for any $\ell\in\intn{1}{Q(d)}$. Thus
$$
\frac{|m|}{2} \leq m( E^{[0,1)^d} ) \leq \sum_{i=1}^{Q(d)} m\left( \bigsqcup_{z\in\mathcal{B}_{\ell}} B_{z} \right)
$$
so that $m(\bigsqcup_{z\in\mathcal{B}_{\ell}} B_{z})\geq |m|/(2Q(d))$ for some $\ell\in\intn{1}{Q(d)}$. Hence, for some finite set $\mathcal{Z}^{[0,1)^d}\subseteq\mathcal{B}_{\ell}$,
\begin{equation}\label{minmessqcupbz}
m\left( \bigsqcup_{z\in\mathcal{Z}^{[0,1)^d}} B_{z} \right) \geq \frac{|m|}{4Q(d)}.
\end{equation}
For every $z\in\mathcal{Z}^{[0,1)^d}$, let $\lambda_{z}$ denote a $c$-adic cube of smallest generation whose closure is included in $F_{z}$. One easily checks that $\diam{\adherence{\lambda_{z}}}\leq\diam{F_{z}}\leq 12c^2\diam{\adherence{\lambda_{z}}}$. Therefore, $\diam{\adherence{\lambda_{z}}}/\kappa\leq \diam{B_{z}}^t\leq \kappa\diam{\adherence{\lambda_{z}}}$. Moreover, for all distinct $z,z'\in\mathcal{Z}^{[0,1)^d}$, we have
\begin{equation}\label{mindistlambdazz}
d(\adherence{\lambda_{z}},\adherence{\lambda_{z'}}) \geq \frac{1}{4}\max\left( \diam{B_{z}} , \diam{B_{z'}} \right).
\end{equation}
This follows from the fact that the balls $B_{z}$ and $B_{z'}$ are distinct and of radius at most $c^{-j}$ with $j\geq (\log_{c}2)/(t-1)$.

Let $G_{1}=\{\lambda_{z}, z\in\mathcal{Z}^{[0,1)^d}\}$. Every cube $\lambda=\lambda_{z}\in G_{1}$ is associated with the ball $\tilde\lambda=B_{z}$. According to what precedes, we have
\begin{equation}\label{inegcubescantor}
\frac{1}{\kappa}\diam{\adherence{\lambda}}\leq \diam{\tilde\lambda}^t\leq \kappa\diam{\adherence{\lambda}} \qquad\text{and}\qquad d(\adherence{\lambda},\adherence{\lambda'}) \geq \frac{1}{4}\max( \diam{\tilde\lambda} , \diam{\tilde\lambda'} )
\end{equation}
for all distinct $\lambda,\lambda'\in G_{1}$. The closed cubes $\adherence{\lambda}$, for $\lambda\in G_{1}$, form the first level of the generalized Cantor set $K$. Furthermore, let
$$
\forall \lambda\in G_{1} \qquad \pi(\adherence{\lambda}) = \frac{m(\tilde\lambda)}{\sum\limits_{\lambda'\in G_{1}} m(\tilde\lambda')}.
$$
Let $\lambda\in G_{1}$. Note that $\tilde\lambda\supseteq\adherence{\lambda}$ and $m(\tilde\lambda)\leq Mh_{\beta,\ph}(\diam{\tilde\lambda})$. So $m(\tilde\lambda)\leq M\kappa^{\beta/t}h_{\beta/t,\ph}(\diam{\adherence{\lambda}})$ because of~(\ref{inegcubescantor}) and the fact that $r\mapsto r^{-\ph(r)}$ is nonincreasing on $(0,c^{-j}]$. Together with~(\ref{minmessqcupbz}) and~(\ref{majcrocetap1}), this yields
$$
\forall \lambda\in G_{1} \qquad \pi(\adherence{\lambda}) \leq M\kappa^{\beta/t} \frac{4Q(d)}{|m|} h_{\beta/t,\ph}(\diam{\adherence{\lambda}}) \leq h_{\beta/t,\ph+\phi/(1+d)}(\diam{\adherence{\lambda}}).
$$

\subsubsection*{Step 2}

As $\phi/(1+d)\in\Phi$, there is an integer $j\geq\gene{\lambda}_{c}+j_{0}+5$ such that
\begin{equation}\label{majcrocetap2}
\forall r\in (0,c^{-j}] \quad r^{-\phi(r)/(1+d)} \geq \max_{\lambda\in G_{1}} \left( M\kappa^{\beta/t} \frac{4Q(d)}{|m|}\diam{\adherence{\lambda}}^{-\beta} h_{\beta/t,\ph+\phi/(1+d)}(\diam{\adherence{\lambda}})\right).
\end{equation}
Let $\lambda\in G_{1}$. Since $f_{n_{2}}\in D_{c}$ and $m\restr{\lambda}$ and $m^\lambda$ are absolutely continuous with respect to one another, Lemma~\ref{fXSfSigmamespleine} ensures that $E^{\lambda}=f_{n_{2}}(X)\cap S^{\lambda}_{j_{0}}\cap f_{n_{2}}(\Sigma)$ has full $m^\lambda$-measure in $S^{\lambda}_{j_{0}}$. Hence $m^\lambda(E^{\lambda})\geq |m^\lambda|/2$. Adapting the proof of Lemma~\ref{lemfXSfSigma}, it is straightforward to show that for every $z\in E^{\lambda}$, the following properties hold:\begin{itemize}
\item $\exists p_{z}\in\Z^d \quad \|{f_{n_{2}}}^{-1}(z)-p_{z}-x_{i_{z}}\|\leq r_{i_{z}}/2$,
\item $B_{z}=\bar B(z,2|f_{n_{2}}|r_{i_{z}})\subseteq\interieur{\lambda} \quad\text{and}\quad m^\lambda(B_{z})\leq M h_{\beta,\ph}( \diam{B_{z}}/\diam{\adherence{\lambda}} )$
\end{itemize}
for some $i_{z}\in I^{\mu,\alpha,\weak}_{M,\psi}$ with $2r_{i_{z}}\leq c^{-j}$. Let $F_{z}=B(f_{n_{2}}(p_{z}+x_{i_{z}}),(|f_{n_{2}}|r_{i_{z}})^t)$ and note that $F_{z}\subseteq B_{z}\cap f_{n_{2}}(B(p_{z}+x_{i_{z}},{r_{i_{z}}}^t))$. Applying Besicovitch's covering theorem, we obtain a finite set $\mathcal{Z}^{\lambda}\subseteq E^{\lambda}$ such that
\begin{equation}\label{minmessqcupbz2}
m^\lambda\left( \bigsqcup_{z\in\mathcal{Z}^{\lambda}} B_{z} \right) \geq \frac{|m^\lambda|}{4Q(d)}.
\end{equation}
For any $z\in\mathcal{Z}^{\lambda}$, let $\lambda_{z}$ denote a $c$-adic cube of smallest generation whose closure is included in $F_{z}$. We have $\diam{\adherence{\lambda_{z}}}/\kappa\leq \diam{B_{z}}^t\leq \kappa\diam{\adherence{\lambda_{z}}}$ and~(\ref{mindistlambdazz}) holds for any $z'\in\mathcal{Z}^{\lambda}\backslash\{z\}$.

Let $G^\lambda_{2}=\{\lambda_{z}, z\in\mathcal{Z}^{\lambda}\}$. Each cube $\nu=\lambda_{z}\in G^\lambda_{2}$ is associated with the ball $\tilde\nu=B_{z}$. We have $\adherence{\nu}\subseteq\tilde\nu\subseteq\interieur{\lambda}$ along with
$$
\frac{1}{\kappa}\diam{\adherence{\nu}}\leq \diam{\tilde\nu}^t\leq \kappa\diam{\adherence{\nu}} \qquad\text{and}\qquad d(\adherence{\nu},\adherence{\nu'}) \geq \frac{1}{4}\max( \diam{\tilde\nu} , \diam{\tilde\nu'} )
$$
for all distinct $\nu,\nu'\in G^\lambda_{2}$. Let $G_{2}=\bigcup_{\lambda\in G_{1}}G^\lambda_{2}$. The closed cubes $\adherence{\nu}$, for $\nu\in G_{2}$, form the second level of the generalized Cantor set $K$. Furthermore, let
$$
\forall \lambda\in G_{1} \quad \forall \nu\in G^\lambda_{2} \qquad \pi(\adherence{\nu}) = \frac{m^\lambda(\tilde\nu)}{\sum\limits_{\nu'\in G^\lambda_{2}} m^\lambda(\tilde\nu')} \pi( \adherence{\lambda} ).
$$
Let $\lambda\in G_{1}$ and let $\nu\in G^\lambda_{2}$. We have $m^\lambda(\tilde\nu)\leq M h_{\beta,\ph}(\diam{\tilde\nu}/\diam{\adherence{\lambda}})$. Hence $m^\lambda(\tilde\nu)\leq M\kappa^{\beta/t}\diam{\adherence{\lambda}}^{-\beta}h_{\beta/t,\ph}(\diam{\adherence{\nu}})$ because $\diam{\tilde\nu}/\diam{\adherence{\lambda}}\geq\diam{\tilde\nu}\geq\diam{\adherence{\nu}}$ and $r\mapsto r^{-\ph(r)}$ is nonincreasing on $(0,c^{-j}/\diam{\adherence{\lambda}}]$. Together with~(\ref{minmessqcupbz2}),~(\ref{majcrocetap2}) and the bound on $\pi(\adherence{\lambda})$ obtained at the previous step, this results in
$$
\pi(\adherence{\nu}) \leq M\kappa^{\beta/t} \frac{4Q(d)}{|m|}\diam{\adherence{\lambda}}^{-\beta} h_{\beta/t,\ph+\phi/(1+d)}(\diam{\adherence{\lambda}}) h_{\beta/t,\ph}(\diam{\adherence{\nu}}) \leq h_{\beta/t,\ph+\phi/(1+d)}(\diam{\adherence{\nu}}).
$$

\subsubsection*{Summing-up of the construction}

Iterating this procedure, we construct recursively a sequence $(G_{q})_{q\geq 0}$ of collections of $c$-adic cubes which enjoy following properties.
\renewcommand{\theenumi}{\Alph{enumi}}
\begin{enumerate}
\item\label{etapezeroheter} We have $G_{0}=\{[0,1)^d\}$ and $\pi([0,1]^d)=1$. In addition, $[0,1)^d$ contains a finite number of sets of $G_{1}$.
\item\label{bilancantorheter1} For every $q\in\N$, each cube $\nu\in G_{q}$ contains a finite number of sets of $G_{q+1}$. Moreover, there are a closed ball $\tilde\nu$ and a unique cube $\lambda\in G_{q-1}$ such that $\adherence{\nu} \subseteq \tilde\nu \subseteq \interieur{\lambda}$ and $\gene{\nu}_{c}\geq\gene{\lambda}_{c}+j_{0}+5$. We have $\diam{\adherence{\nu}}/\kappa\leq \diam{\tilde\nu}^t\leq \kappa\diam{\adherence{\nu}}$. The center of $\tilde\nu$ belongs to $S^\lambda_{j_{0}}$. The balls $\tilde\nu$, for $\nu\in G_{q}$, are disjoint and $d(\adherence{\nu},\adherence{\nu'}) \geq \max( \diam{\tilde\nu} , \diam{\tilde\nu'} )/4$ for all distinct $\nu,\nu'\in G_{q}$.
\item\label{bilancantorheter2} For every $q\in\N$, the generation of each cube $\lambda\in G_{q}$ is at least $j_{1}$.
\item\label{bilancantorheter3} For every $q\in\N$ and every cube $\nu\in G_{q}$ that is included in $\lambda\in G_{q-1}$, there are $i\in I^{\mu,\alpha,\weak}_{M,\psi}$ with $\diam{\adherence{\nu}}\leq 2r_{i}<\diam{\adherence{\lambda}}$ and $p\in\Z^d$ such that $\adherence{\nu}\subseteq f_{n_{q}}(B(p+x_{i},{r_{i}}^t))$.
\item\label{bilanmesureheter} For every $q\in\N$ and every cube $\nu\in G_{q}$ that is included in $\lambda\in G_{q-1}$,
$$
\pi(\adherence{\nu}) = \frac{m^\lambda(\tilde\nu)}{\sum\limits_{\nu'\in G_{q} \atop \nu'\subseteq\lambda} m^\lambda(\tilde\nu')} \pi( \adherence{\lambda} ) \leq \frac{4Q(d)}{|m|}m^\lambda(\tilde\nu)\pi( \adherence{\lambda} ) \quad\text{and}\quad \pi(\adherence{\nu}) \leq h_{\beta/t,\ph+\phi/(1+d)}(\diam{\adherence{\nu}}).
$$
\end{enumerate}
\renewcommand{\theenumi}{\arabic{enumi}}
Thus $K = \bigcap_{q=0}^\infty\downarrow \bigcup_{\lambda\in G_{q}} \adherence{\lambda}$ is a generalized Cantor set included in $\bigcap_{n}f_{n}(E_{t}^{\weak})$ (because $n_{q}=n$ for infinitely many integers $q$) and $\pi$ can be extended to a Borel probability measure supported on $K$, thanks to Proposition~1.7 in~\cite{Falconer:2003oj}.

\subsubsection*{Scaling properties of $\pi$}

Let $B$ be an open ball with diameter less than the smallest distance between to distinct cubes of $G_{1}$. In addition, suppose that $\diam{B}$ is small enough to ensure that $h_{\beta/t,2\ph+\phi/(1+d)}\in\jauge_{d}$ increases in $[0,\diam{B})$ and that $r^{-\ph(r)}\geq 1$ for all $r\in (0,\diam{B}]$. We can assume that $B$ intersects $K$ and that $B$ intersects the closure of at least two cubes of $G_{q+1}$ for some $q\geq 0$. Otherwise, $\pi(B)$ would vanish owing to~(\ref{bilanmesureheter}). Let $\lambda\in G_{q}$ ($q\geq 0$) be the cube of largest diameter such that $B$ intersects the closure of at least two cubes of $G_{q+1}$ that are included in $\lambda$. Observe that $B$ does not intersect the closure of any other cube of $G_{q}$, so that $\pi(B)\leq\pi(\adherence{\lambda})$. Moreover, $\lambda$ cannot belong to $G_{0}$.

First, assume that $\diam{B}\geq\diam{\adherence{\lambda}}$. As $h_{\beta/t,\ph+\phi/(1+d)}$ increases in $[0,\diam{B})$, we have
$$
\pi(B) \leq \pi(\adherence{\lambda}) \leq h_{\beta/t,\ph+\phi/(1+d)}(\diam{\adherence{\lambda}}) \leq h_{\beta/t,\ph+\phi/(1+d)}(\diam{B})
$$
on account of~(\ref{bilanmesureheter}). Since $\diam{B}^{-\ph(\diam{B})}\geq 1$, this yields
\begin{equation}\label{majpiB1}
\pi(B) \leq h_{\beta/t,2\ph+\phi/(1+d)}(\diam{B}).
\end{equation}

Next, assume that $\diam{B}\leq c^{-j_{0}-5}\diam{\adherence{\lambda}}$. Let $\nu_{1},\ldots,\nu_{p}$ ($p\geq 2$) denote the cubes of $G_{q+1}$ whose closure intersects $B$. Thanks to~(\ref{bilanmesureheter}), we have
$$
\pi(B) = \sum_{p'=1}^p \pi(B\cap\adherence{\nu_{p'}}) \leq \frac{4Q(d)}{|m|}\pi( \adherence{\lambda} ) \sum_{p'=1}^p m^\lambda(\tilde\nu_{p'}).
$$
Let $j$ be the largest integer such that $\diam{B}\leq c^{-j+1}$. Because of~(\ref{bilancantorheter1}), we have $\diam{\tilde\nu_{p'}}\leq c^{-j+3}$ for all $p'\in\intn{1}{p}$ and $d(z,\adherence{\nu_{p'}}) \leq c^{-j+4}$ for all $p'\in\intn{2}{p}$, where $z$ denotes the center of $\tilde\nu_{1}$. Thus
$$
\bigsqcup_{p'=1}^p \tilde\nu_{p'} \subseteq \bar B\left(z,c^{-j+3}+c^{-j+4}\right) \subseteq 3\lambda^c_{j-6}(z)\cap\lambda.
$$
Recall that $j-6\geq\gene{\lambda}_{c}+j_{0}$ and that $z\in S^\lambda_{j_{0}}$, by virtue of~(\ref{bilancantorheter1}). Hence $m^{\lambda}(\lambda^c_{j-6,k})\leq h_{\beta,\ph}(\diam{\lambda^c_{j-6,k}}/\diam{\adherence{\lambda}})$ for each $k\in\intn{0}{c^{j-6}-1}^d$ with $\lambda^c_{j-6,k}\subseteq 3\lambda^c_{j-6}(z)\cap\lambda$. Along with the fact that $c^{-j}<\diam{B}$, this results in
\begin{equation*}\begin{split}
\sum_{p'=1}^p m^\lambda(\tilde\nu_{p'}) &= m^\lambda\left( \bigsqcup_{p'=1}^p \tilde\nu_{p'} \right) \leq 3^d h_{\beta,\ph}\left( \frac{c^{-j+6}}{\diam{\adherence{\lambda}}} \right) \\
&\leq 3^d c^{6\beta} \left( \frac{\diam{B}}{\diam{\adherence{\lambda}}} \right)^{\beta} \left( \frac{c^{-j+6}}{\diam{\adherence{\lambda}}} \right)^{-\ph\left(\frac{c^{-j+6}}{\diam{\adherence{\lambda}}}\right)}.
\end{split}\end{equation*}
Meanwhile, $\diam{B}\leq c^{-j+6}/\diam{\adherence{\lambda}} \leq c^{-j_{0}+1}$ and $r\mapsto r^{-\ph(r)}$ is nonincreasing on $(0,c^{-j_{0}+1}]$, so that the last factor is at most $\diam{B}^{-\ph(\diam{B})}$. As a consequence,
$$
\pi(B) \leq \frac{4Q(d)}{|m|} 3^d c^{6\beta} \left( \frac{\diam{B}}{\diam{\adherence{\lambda}}} \right)^{\beta} \diam{B}^{-\ph(\diam{B})} h_{\beta/t,\ph+\phi/(1+d)}(\diam{\adherence{\lambda}}).
$$
The function $r\mapsto r^{-\ph(r)-\phi(r)/(1+d)}$ is nonincreasing on $(0,c^{-j_{1}}]$ and, by~(\ref{bilancantorheter2}), we have $\diam{B}\leq\diam{\adherence{\lambda}}\leq c^{-j_{1}}$. Thus, for $C=4Q(d) 3^d c^{6\beta}/|m|$,
\begin{equation}\label{majpiB2}\begin{split}
\pi(B) &\leq\frac{4Q(d)}{|m|} 3^d c^{6\beta} h_{\beta/t,2\ph+\phi/(1+d)}(\diam{B}) \left( \frac{\diam{B}}{\diam{\adherence{\lambda}}} \right)^{\beta(1-\frac{1}{t})} \\
&\leq C h_{\beta/t,2\ph+\phi/(1+d)}(\diam{B}).
\end{split}\end{equation}

Now assume that $c^{-j_{0}-5}\diam{\adherence{\lambda}} < \diam{B} < \diam{\adherence{\lambda}}$. Then $B\cap\adherence{\lambda}$ is covered by at most $c^{d(j_{0}+6)}$ cubes of generation $j_{0}+5+\gene{\lambda}_{c}$. Let $\nu$ be such a cube. Note that~(\ref{majpiB2}) still holds for closed balls with diameter less than $c^{-j_{0}-5}\diam{\adherence{\lambda}}$. In particular, if $\adherence{\nu}$ intersects the closure of at least two cubes of $G_{q+1}$ which are included in $\lambda$, we have $\pi(\adherence{\nu}) \leq C h_{\beta/t,2\ph+\phi/(1+d)}(\diam{\adherence{\nu}})$, thanks to~(\ref{majpiB2}). If $\adherence{\nu}$ intersects the closure of only one cube $\lambda'\in G_{q+1}$ which is included in $\lambda$, let $\chi\in G_{q'}$ ($q'\geq q+1$) be the subcube of $\lambda'$ of largest diameter such that $\adherence{\nu}$ intersects the closure of at least two cubes of $G_{q'+1}$ which are included in $\chi$. Then~(\ref{bilancantorheter1}) ensures that $\diam{B}>\diam{\adherence{\nu}}\geq\diam{\adherence{\chi}}$ and~(\ref{majpiB1}) yields $\pi(\adherence{\nu}) \leq h_{\beta/t,2\ph+\phi/(1+d)}(\diam{\adherence{\nu}})$. If $\adherence{\nu}$ does not intersect the closure of any cube of $G_{q+1}$ which is included in $\lambda$, we have $\pi(\adherence{\nu})=0$. It follows that
$$
\pi(B)\leq\max(1,C) c^{d(j_{0}+6)} h_{\beta/t,2\ph+\phi/(1+d)}(c^{-j_{0}-5}\diam{\adherence{\lambda}}).
$$
Recall that $c^{j_{0}}\leq (c^{-j_{1}})^{-\phi(c^{-j_{1}})/(1+d)}\leq \diam{\adherence{\lambda}}^{-\phi(\diam{\adherence{\lambda}})/(1+d)}\leq \diam{B}^{-\phi(\diam{B})/(1+d)}$, because $r\mapsto r^{-\phi(r)/(1+d)}$ is nonincreasing on $(0,c^{-j_{1}}]$. Moreover, $c^{-j_{0}-5}\diam{\adherence{\lambda}}<\diam{B}<\diam{\adherence{\lambda}}\leq c^{-j_{1}}$, owing to~(\ref{bilancantorheter2}), and $h_{\beta/t,2\ph+\phi/(1+d)}$ increases in the same interval. Hence
$$
\pi(B)\leq\max(1,C) c^{6d} h_{\beta/t,2\ph+\phi}(\diam{B}).
$$

We finally obtain~(\ref{eqpropKpi}), for any open ball $B$ of sufficiently small radius. Lemma~\ref{massdistprinc} leads to~(\ref{hauxicapdcstricpos}) for any $t>1$.

By virtue of~(\ref{hauxicapdcstricpos}), $E_{t}^{\weak}$ belongs to $\grint^{h_{\beta,\ph}}(\R^d)$ if $t=1$ and to $\grint^{h_{\beta/t,2\ph+\phi}}(\R^d)$ for any $\phi\in\Phi$ if $t>1$. Lemma~\ref{etweakgrint} implies that $E_{t}$ belongs to the same classes. Moreover, $\tilde E_{t}\cap U=E_{t}\cap U$ for all open subset $U$ of $(0,1)^d$. Thus $\tilde E_{t}$ belongs to $\grint^{h_{\beta,\ph}}((0,1)^d)$ if $t=1$ and to $\grint^{h_{\beta/t,2\ph+\phi}}((0,1)^d)$ for any $\phi\in\Phi$ if $t>1$. Theorem~\ref{UBIQUITYHETER} is proven.

\subsection{Proofs of Proposition~\ref{ubiquitymultinom} and Proposition~\ref{ubiquitygibbs}}

The proofs of Proposition~\ref{ubiquitymultinom} and Proposition~\ref{ubiquitygibbs} are very similar to that of Theorem~\ref{UBIQUITYHETER}, so we just give the necessary modifications.

\subsubsection{Proof of Proposition~\ref{ubiquitymultinom}}

For some Borel set $\Sigma$ of full $\bar m$-measure in $\R^d$ and for all $x\in\Sigma$, there is an integer $j(x)\in\N$ such that for all integer $j\geq j(x)$,
\begin{equation}\label{develcondcad}
\sup_{z\in [0,1]^d\cap 3\lambda^c_{j}(x\modu\Z^d)} \max_{s\in\intn{1}{d} \atop b\in\intn{0}{c-1}} \left|\sigma_{b,j}(z^s)-\pi^s_{b}\right|\leq \sqrt{\log c}\, \varsigma(c^{-j}).
\end{equation}
Note that the set $F$ defined by~(\ref{defFunioncubouv}) has full $\bar m$-measure in $\R^d$. Thus, replacing $\Sigma$ by $\Sigma\cap F$ if necessary, we may assume that $\Sigma\subseteq F$.

We can replace the set $I^{\mu,\alpha,\weak}_{M,\psi}$ by $I^\devel_{\pi}$ in the statement of Lemma~\ref{lemfXSfSigma}. Indeed, with the notations of the proof of Lemma~\ref{lemfXSfSigma}, the point $\dot x=x-q=x\modu\Z^d$ now belongs to $(0,1)^d$ and the integer $j_{0}$ is now such that~(\ref{develcondcad}) holds for any integer $j\geq j_{0}$. Let $i\in I_{2}$ and $p\in\Z^d$ with $x\in\bar B(p+x_{i},r_{i}/2)$. As $x\in F$, we may assume that $\bar B(p+x_{i},r_{i}/2)\subseteq F$. Thus $q=p$ and $\dot x\in\bar B(x_{i},r_{i}/2)$. Let $j_{1}$ be the largest integer such that $r_{i}\leq c^{-j_{1}}$. We have $x_{i}\in\bar B(\dot x,r_{i}/2)\subseteq 3\lambda^c_{j_{1}}(\dot x)\cap [0,1]^d$ and $j_{1}\geq j_{0}$, so that
$$
\max_{s\in\intn{1}{d} \atop b\in\intn{0}{c-1}} \left|\sigma_{b,j_{1}}(x_{i}^s)-\pi^s_{b}\right|\leq \sqrt{\log c}\, \varsigma(c^{-j_{1}})
$$
owing to~(\ref{develcondcad}). Meanwhile, $j_{1}=\lfloor -\log_{c}r_{i} \rfloor$ and $\varsigma(c^{-j_{1}})\leq 2\varsigma(r_{i})$ if $j_{0}$ is large enough. We end up with $i\in I^\devel_{\pi}$.

We can replace $I^{\mu,\alpha,\weak}_{M,\psi}$ by $I^\devel_{\pi}$ everywhere in the proof of Theorem~\ref{UBIQUITYHETER}. This way, we obtain~(\ref{hauxicapdcstricpos}) with $E^{\devel}_{t}$ instead of $E_{t}^{\weak}$, for any $t\geq 1$. Proposition~\ref{ubiquitymultinom} follows.

\subsubsection{Proof of Proposition~\ref{ubiquitygibbs}}

For some Borel set $\Sigma$ of full $\bar m$-measure in $\R^d$ and for all $x\in\Sigma$, there is an integer $j(x)\in\N$ such that for all integer $j\geq j(x)$,
\begin{equation}\label{supencgibbs}
\sup_{z\in 3\lambda^c_{j}(x)} \left| \frac{1}{j} S^{f}_{j}(z) - P'_{f}(q) \right| \leq \frac{\kappa}{|q|}\varsigma(c^{-j}).
\end{equation}
We can replace the set $I^{\mu,\alpha,\weak}_{M,\psi}$ by $I^\birkhoff_{f,q}$ in the statement of Lemma~\ref{lemfXSfSigma}. Indeed, the integer $j_{0}$ is now such that~(\ref{supencgibbs}) holds for any integer $j\geq j_{0}$. Let $i\in I_{2}$ and $p\in\Z^d$ with $x\in\bar B(p+x_{i},r_{i}/2)$. Let $j_{1}$ be the largest integer such that $r_{i}\leq c^{-j_{1}}$. We have $p+x_{i}\in\bar B(x,r_{i}/2)\subseteq 3\lambda^c_{j_{1}}(x)$ and $j_{1}\geq j_{0}$, so that
$$
\left| \frac{1}{j_{1}} S^{f}_{j_{1}}(x_{i}) - P'_{f}(q) \right| \leq \frac{\kappa}{|q|}\varsigma(c^{-j_{1}})
$$
thanks to~(\ref{supencgibbs}) and the periodicity of $f$. Meanwhile, $j_{1}=\lfloor -\log_{c}r_{i} \rfloor$ and $\varsigma(c^{-j_{1}})\leq 2\varsigma(r_{i})$ if $j_{0}$ is large enough. This yields $i\in I^\birkhoff_{f,q}$.

We can replace the set $I^{\mu,\alpha,\weak}_{M,\psi}$ by $I^\birkhoff_{f,q}$ everywhere in the proof of Theorem~\ref{UBIQUITYHETER}. Thus,~(\ref{hauxicapdcstricpos}) holds with $E^{\birkhoff}_{t}$ instead of $E_{t}^{\weak}$, for all $t\geq 1$. Proposition~\ref{ubiquitygibbs} follows.

\end{document}